\tikzset{
  >=latex,
  inner sep=0pt,
  outer sep=2pt,
  mark coordinate/.style={inner sep=0pt,outer sep=0pt,minimum size=3pt,fill=black,circle}
}
\newtheorem{thm}{Theorem}[section]
\newtheorem{cor}[thm]{Corollary}
\newtheorem{lemma}[thm]{Lemma}
\newtheorem{proposition}[thm]{Proposition}
\theoremstyle{definition}
\newtheorem{definition}[thm]{Definition}
\newtheorem{remark}[thm]{Remark}
\theoremstyle{remark}
\newcommand{\IR}{\mathbb{R}}
\newcommand{\mcL}{\mathcal{L}}
\newcommand{\mcF}{\mathcal{F}}
\newcommand{\CC}{\mathscr{C}}
\newcommand{\Ren}{{\textup{\tiny R}}}
\newcommand{\Pri}{{\textup{\tiny P}}}
\newcommand{\Dua}{{\textup{\tiny D}}}
\newcommand{\ol}[1]{\overline{#1}}
\newcommand{\CGCC}{\CC_{\textup{\tiny GCC}}}
\newcommand{\veps}{\varepsilon}
\DeclareMathOperator{\im}{im}
\DeclareMathOperator{\Gr}{Gr}
\DeclareMathOperator{\inter}{int}
\DeclareMathOperator{\tr}{tr}
\DeclareMathOperator{\id}{id}
\def\R{\IR}
\def\CG{\CC}
\def\CR{\mathcal{R}}
\def\PG{\mathcal{P}}
\def\DG{\mathcal{D}}
\def\PGm{\PG_m(C)}
\def\DGm{\DG_m(C)}
\def\SG{\Sigma}
\def\SGm{\Sigma_m(C)}
\def\FPR{\mcF^\Pri_\Ren}
\def\FDR{\mcF^\Dua_\Ren}
\def\SR{\Sigma_\Ren}
\def\sa{\sphericalangle}
\def\inte{\mathrm{int}}
\def\GL{\mathrm{GL}}
\def\GR{\Gr_{n,m}}
\def\a{\alpha}
\def\Es{\mathcal{E}}
\def\Ds{\mathcal{D}}
\def\RD{\R^{m\times n}_{-}}
\def\s{\sigma}
\def\Ex{\mathbb{E}}
\def\capp{\mathsf{cap}}
\begin{document}

\title{\bf A coordinate-free condition number for convex programming}
\author{Dennis Amelunxen$^\ast$ and Peter B\"urgisser\thanks{Institute of Mathematics, University of Paderborn, Germany. 
Partially supported by DFG grant BU 1371/2-1 and DFG Research Training Group on Scientific Computation GRK 693 (PaSCo GK).}
        \\ University of Paderborn
        \\ \{damelunx,pbuerg\}@math.upb.de
        }
\date{June 5, 2012}
\maketitle 

\begin{abstract}
We introduce and analyze a natural geometric version of Renegar's condition number~$\CR$
for the homogeneous convex feasibility problem 
associated with a regular cone~$C\subseteq\R^n$.
Let $\Gr_{n,m}$ denote the Grassmann manifold of $m$-dimensional
linear subspaces of $\R^n$ and consider 
the projection distance 
$d_p(W_1,W_2) := \| \Pi_{W_1} - \Pi_{W_2}\|$ (spectral norm) 
between $W_1,W_2\in \Gr_{n,m}$,
where $\Pi_{W_i}$ denotes the orthogonal projection onto $W_i$.
We call 
$\CG(W) := \max\{ d_p(W,W')^{-1} \mid W' \in \Sigma_m\}$
the {\em Grassmann condition number} of~$W\in\GR$,
where the set of ill-posed instances $\Sigma_m\subset\Gr_{n,m}$ is defined 
as the set of linear subspaces touching~$C$. 
We show that if $W =\im(A^T)$ for a matrix $A\in\R^{m\times n}$, then 
$\CG(W) \le \CR(A) \le \CG(W)\, \kappa(A)$,
where $\kappa(A) =\|A\| \|A^\dagger\|$ denotes the 
matrix condition number.
This extends work by Belloni and Freund in
Math.~Program.~119:95--107 (2009). 
Furthermore,
we show that $\CG(W)$ can as well be characterized 
in terms of the Riemannian distance metric on $\GR$. 
This differential geometric characterization of $\CG(W)$
is the starting point of the sequel [arXiv:1112.2603] to this paper, 
where the first probabilistic analysis of Renegar's condition number 
for an arbitrary regular cone~$C$ is achieved. 
\end{abstract}

\smallskip

\noindent{\bf Key words:} convex programming, perturbation, condition number

\section{Introduction}

It is by now a well established fact~\cite{rene:95b,rene:95a,vaye:95,frve:99,epfr:00,chcu:04,chcu:06} 
that the running time of a variety of algorithms in linear programming 
can be efficiently bounded in terms of a notion of condition. 
The condition is defined as a measure of sensitivity of the output 
with respect to small perturbations of the input.  
Different variants of this notion exists: 
the most common is the one originally introduced by 
Jim Renegar~\cite{rene:94,rene:95b,rene:95a}.

The analysis of the probability distribution of the condition of random input data 
is a thoroughly studied subject
for a variety of numerical problems,
 compare the recent survey~\cite{buergICM} for references. 
It has recently received increased attention through the concept of {\em smoothed analysis}, 
introduced by Spielman and Teng~\cite{ST:04}, who also
managed to perform a smoothed analysis of Renegar's condition number for \emph{linear programming}~\cite{DST:09}. 

The motivation of the present work is to extend such probabilistic analyses to \emph{general convex programming}, notably to \emph{second-order} and \emph{semidefinite programming}.
Renegar's condition number is hard to analyze directly. 
In fact, behind the analysis in~\cite{DST:09} there is an intermediate concept, 
the so-called GCC-condition number~\cite{goff:80,ChC:01} tailored to the LP cone $\R^n_+$, 
that has nice geometric characterizations that 
facilitate its probabilistic analysis, see \cite{BCL:08a,AB:08}. 
All known probabilistic analyses of condition numbers for linear programming heavily rely on 
the product structure of the cone $\R_+\times\cdots\times \R_+$ and thus cannot be extended to general convex cones. 
In this paper we introduce a coordinate-free, geometric notion of condition of independent interest, 
that allows to overcome this difficulty at the price of working in the intrinsic geometric setting of Grassmann manifolds.

\subsection{Renegar's condition number}\label{se:Rene}

A {\em regular cone} $C\subset\IR^n$ is 
a closed convex cone with nonempty interior that 
does not contain a nontrivial linear subspace. 
The \emph{dual cone}\footnote{Some
authors call $\breve{C}$ the polar cone and $-\breve{C}$ the dual cone.}
 of $C$ is defined as 
$\breve{C} := \{z\in\IR^n\mid \forall x\in C : z^Tx\leq 0\}$.
If $C$ is regular, then $\breve{C}$ is regular as well.
We call $C$ \emph{self-dual} if $\breve{C}=-C$. 
Important cones for applications are, 
besides the LP case $C=\R_+^n$, 
the second order cones $C=\mcL^{n_1}\times\ldots\times \mcL^{n_k}$, 
where $\mcL^n:=\{x\in\IR^n\mid x_n\geq (x_1^2 + \cdots + x_{n-1}^2)^{1/2}\}$, 
and the cone of positive semidefinite matrices.
All these cones are self-dual.

In the following we fix a regular cone $C\subset\IR^n$.
Throughout the paper we assume that $1\leq m <n$.
The \emph{homogeneous convex feasibility problem} is to decide 
for a given matrix $A\in\IR^{m\times n}$ the alternative 
\begin{align}
   &\exists x\in\IR^n\setminus \{0\} \;  \text{ s.t.} \quad A x=0 \,,\; x\in \breve{C} \; , \label{eq:P}\tag{P}\\
   &\exists y\in\IR^m\setminus  \{0\} \;  \text{ s.t. } \quad A^Ty\in C \; . \label{eq:D}\tag{D}
\end{align}
We define the sets of primal and dual feasible instances with respect to~$C$, respectively, by 
\begin{align}\label{eq:star1}
   \FPR & := \big\{A\in\IR^{m\times n} \mid \eqref{eq:P} \text{ is feasible} \big\}
       = \{A\mid \ker(A)\cap \breve{C}\neq \{0\} \} \; , \\[2mm] \label{eq:star2}
  \FDR & := \big\{A\in\IR^{m\times n} \mid \eqref{eq:D} \text{ is feasible} \big\}
 = \RD \; \cup\; \{A\mid \im(A^T)\cap C\neq \{0\} \} \; ,
\end{align}
where $\RD$ denotes the set of rank-deficient matrices in $\IR^{m\times n}$. 
Here are the most relevant properties of the sets $\FPR$ and $\FDR$:
they are closed, 
both their boundaries coincide with the \emph{set of ill-posed inputs} $\SR := \FPR\cap\FDR$, 
and $\IR^{m\times n} = \FPR\cup\FDR$.
(We shall state and prove related statements in Section~\ref{se:GCN}).
We also note that $\FPR$ and $\FDR$ are invariant under 
the action of the general linear group $\GL(m)$ on $\R^{m\times n}$ by left multiplication. 

\emph{Renegar's condition}~\cite{rene:94,rene:95b,rene:95a} 
is defined as the function
\begin{equation}\label{def:RCN}
 \CR:=\CR_C\colon\IR^{m\times n}\setminus \{0\}\to[1,\infty] \;,\quad \CR_C(A) := \frac{\|A\|}{d(A,\SR)} \; , 
\end{equation}
where $\|A\|$ denotes the spectral norm, and $d(A,\SR)=\min\{\|A-A'\|\mid A'\in\SR\}$. 
One can also characterize $\CR(A)^{-1}$ as follows:
\[
 \CR(A)^{-1} = \max\left\{r\left| \|\Delta A\|\leq r\cdot \|A\| \Rightarrow 
 \left(\begin{matrix} A+\Delta A\in\FPR & \text{if $A\in\FPR$} \\ 
  A+\Delta A\in\FDR & \text{if $A\in\FDR$}\end{matrix}\right)\right.\right\} \; . 
\]
That is, $\CR(A)^{-1}$ is the maximum $r\ge 0$ such that all
pertubations $\Delta A$ of norm at most $r\|A\|$ do not change the
feasibility status of $A$.

\subsection{The Grassmann condition number}\label{se:GCN}

The \emph{Grassmann manifold} $\Gr_{n,m}$ is defined as the set of 
$m$-dimensional linear subspaces~$W$ of~$\R^n$. 
It is well known that $\Gr_{n,m}$  is a compact smooth 
manifold on which the orthogonal group $O(n)$
acts transitively, see for instance~\cite{booth}.

Following~\eqref{eq:star1} and \eqref{eq:star2}, we define 
the sets of $m$-dimensional {\em primal feasible subspaces}, 
and {\em dual feasible subspaces} with respect to the regular cone~$C$, respectively, by
\begin{equation*}\label{eq:def-F^P_G,F^D_G}
  \PG_m(C)  := \big\{W\in\Gr_{n,m} \mid W^\bot\cap \breve{C}  \neq \{0\} \big\} \;,\quad 
  \DG_m(C) := \big\{W\in\Gr_{n,m} \mid  W\cap C  \neq \{0\} \big\} .
\end{equation*}
Note that, unlike in~\eqref{eq:P} and \eqref{eq:D}, there is no structural difference between primal and dual feasibility.
The primal feasibility of $W$ with respect to $C$ just means the dual feasibility of $W^\bot$ with 
respect to $\breve{C}$. 
In terms of the involution\footnote{In this context we interpret $\iota_m$ as an involution, since $\iota_{n-m}\circ \iota_m=\id_m$ and $\iota_m\circ \iota_{n-m}=\id_{n-m}$, where $\id_m$ denotes the identity map on $\Gr_{n,m}$.}
$$
 \iota_m\colon\Gr_{n,m} \to \Gr_{n,n-m},\; W \mapsto W^\bot ,
$$
this can be expressed as 
$\iota_m\big(\PG_m(C)\big) = \DG_{n-m}(\breve{C})$.

We claim that $\Gr_{n,m}=\PG_m(C)\cup\DG_m(C)$.
For this recall the well-known 
theorem on alternatives\footnote{Although this theorem 
of alternatives is folklore, we could not
find a perfect reference for it in the literature.
See for example~\cite[Thm.~3]{BB:73} for a complex version of~\eqref{eq:alt}.
The given proof is easily adapted to the real case.}, 
which for $C=\IR_+^n$ is also known as Farkas' Lemma~\cite{farkas:01}:
for $W \in \Gr_{n,m}$ we have 
\begin{equation}\label{eq:alt}
W\cap\inte(C)\ne\emptyset \iff W^\bot\cap\breve{C}= \{0\}.
\end{equation}
Now $W\not\in\PG_m(C)$ means 
$W^\bot\cap \breve{C}=\{0\}$, which by~\eqref{eq:alt} is equivalent to 
$W\cap \inte(C) \ne\emptyset$. 
This in particular implies $W\in\DG_m(C)$. 

By a similar reasoning we obtain the following characterization 
of the set of $m$-dimensional \emph{ill-posed subspaces} 
$\SG_m(C):=\PG_m(C)\cap\DG_m(C)$ with respect to~$C$:
\begin{equation}\label{eq:charS}
   \SG_m(C) = \{W\in\Gr_{n,m}\mid W\cap C \neq \{0\} \text{ and } W\cap \inte(C) = \emptyset \} \; .
\end{equation}
Thus $\SG_m(C)$ consists of the subspaces $W\in\Gr_{n,m}$ touching the cone~$C$.
As for the involution~$\iota_m$, we obtain
the following duality relations: 
\begin{equation}\label{eq:sym}
\iota_m(\PG_m(C)) = \DG_{n-m}(\breve{C}), \quad 
\iota_m(\DG_m(C)) = \PG_{n-m}(\breve{C}), \quad 
\iota_m(\SG_m(C)) = \SG_{n-m}(\breve{C}). 
\end{equation}

The {\em projection distance} $d_p(W_1,W_2)$ of $W_1,W_2\in \Gr_{n,m}$
is defined as the spectral norm 
$d_p(W_1,W_2) :=\|\Pi_{W_1}-\Pi_{W_2}\|$,
where $\Pi_{W_i}$ denotes the orthogonal projection onto~$W_i$, cf.~\cite[\S2.6]{GoLoan}. 
Clearly, this defines a metric and a corresponding topology on $\Gr_{n,m}$.
We will see, cf.~\eqref{eq:perp}, that
$d_p(W_1^\perp,W_2^\perp) = d_p(W_1,W_2)$ so that the 
involution $\iota_m$
preserves the projection distance.

The proof of the following basic topological result is provided at the end of the paper. 

\begin{proposition}\label{pro:Gtop}
\begin{enumerate}
\item[(1)] The sets $\PG_m(C)$ and $\DG_m(C)$ are both closed subsets of $\Gr_{n,m}$.
\item[(2)] The boundaries of $\PG_m(C)$ and $\DG_m(C)$ both coincide with $\SG_m(C)$. 
\end{enumerate}
\end{proposition}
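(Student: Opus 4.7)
\emph{For part (1), closedness of $\DG_m(C)$.}
The plan is a standard compactness argument. Suppose $W_k\to W$ in $\Gr_{n,m}$ with each $W_k\in\DG_m(C)$. Pick $x_k\in W_k\cap C$ with $\|x_k\|=1$; these lie in the unit sphere, so a subsequence converges to some $x$ with $\|x\|=1$. Since $C$ is closed, $x\in C$. To see $x\in W$, I will use that $W_k\to W$ in projection distance means $\Pi_{W_k}\to\Pi_W$ in operator norm, and $x_k=\Pi_{W_k}x_k\to\Pi_W x=x$, so $x\in W$. Hence $W\in\DG_m(C)$. Closedness of $\PG_m(C)$ follows from this together with the relation $\iota_m(\PG_m(C))=\DG_{n-m}(\breve C)$ from~\eqref{eq:sym}, once one notes that $\iota_m$ is a homeomorphism (actually an isometry for $d_p$, by the identity $d_p(W^\perp,W'^\perp)=d_p(W,W')$ alluded to in the excerpt).

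\emph{For part (2), the easy inclusion $\partial\PG_m(C)\subseteq\SG_m(C)$.}
Since $\Gr_{n,m}=\PG_m(C)\cup\DG_m(C)$ and $\SG_m(C)=\PG_m(C)\cap\DG_m(C)$, we have $\PG_m(C)\setminus\SG_m(C)=\Gr_{n,m}\setminus\DG_m(C)$, which is open by part~(1). Hence $\PG_m(C)\setminus\SG_m(C)\subseteq\inter(\PG_m(C))$, and since $\PG_m(C)$ is closed, $\partial\PG_m(C)=\PG_m(C)\setminus\inter(\PG_m(C))\subseteq\SG_m(C)$. By symmetry (swap the roles of $\PG$ and $\DG$), the same works for $\DG_m(C)$.

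\emph{For part (2), the hard inclusion $\SG_m(C)\subseteq\partial\PG_m(C)$.}
This is the main step: given $W\in\SG_m(C)$, I need to exhibit a sequence $W_t\to W$ with $W_t\notin\PG_m(C)$, i.e., by Farkas~\eqref{eq:alt}, with $W_t\cap\inter(C)\neq\emptyset$. By~\eqref{eq:charS} pick a unit vector $e_1\in W\cap C$ (so $e_1\in\partial C$) and extend to an orthonormal basis $e_1,\ldots,e_m$ of $W$. Choose any $f\in\inter(C)$; since $W\cap\inter(C)=\emptyset$ we have $f\notin W$, so $g:=\Pi_{W^\perp}(f)\neq 0$. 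Define
\[
W_t := \lin\bigl(e_1+t\,g,\; e_2,\ldots,e_m\bigr),
\]
which is $m$-dimensional for all $t$ and satisfies $W_t\to W$ as $t\to 0$. A direct computation shows that the linear combination $(e_1+t g)+\sum_{i\ge 2}t\langle f,e_i\rangle e_i$ of these basis vectors equals $(1-t\langle f,e_1\rangle)e_1+tf$. For $t>0$ small enough this is a strictly positive conic combination of $e_1\in C$ and $f\in\inter(C)$, hence lies in $\inter(C)$ (using $\inter(C)+C\subseteq\inter(C)$). Thus $W_t\cap\inter(C)\neq\emptyset$, so $W_t\notin\PG_m(C)$, proving $W\in\partial\PG_m(C)$. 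The statement for $\DG_m(C)$ then follows by applying the same argument to $\breve C$ and transporting via $\iota_m$ using~\eqref{eq:sym}.

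\emph{Anticipated obstacle.} The main technical point is the perturbation in the hard inclusion: one must simultaneously stay on the Grassmann manifold, converge to $W$, and engineer a vector in the interior of $C$. The trick above, tilting a single basis vector of $W$ in the direction of an interior point $f$ and then correcting by the remaining basis vectors to recover $(1-t\langle f,e_1\rangle)e_1+tf$, is what makes this work cleanly; the algebraic identity is elementary but the geometric choice of tilt direction is the key idea.
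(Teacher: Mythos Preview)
Your proof is correct and follows essentially the same strategy as the paper: a compactness argument for closedness, the duality via $\iota_m$ to transfer between $\PG_m$ and $\DG_m$, the set-complement argument for the easy inclusion in part~(2), and a one-direction perturbation of $W$ toward $\inter(C)$ for the hard inclusion. The only difference is that the paper's perturbation is a bit simpler---it just picks a sequence $x_k\in\inter(C)$ with $x_k\to x$ (your $e_1$) and sets $W_k:=\mathbb{R}x_k+(W\cap x^\perp)$, so that $x_k\in W_k\cap\inter(C)$ is immediate without any algebraic identity.
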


We define now the Grassmann condition of $W\in \Gr_{n,m}$ as the inverse distance of $W$ 
to the set $\SG_m(C)$ of ill-posed subspaces, measured by the projection distance.

\begin{definition}\label{def:GCN}
The \emph{Grassmann condition} with respect to the regular cone~$C\subseteq\R^n$ 
is defined as the function
\[ 
 \CG_C\colon \GR \to [1,\infty] \;,\quad \CG_C(W) := \frac{1}{d_p(W,\SG_m(C))}  \; ,
\]
where $d_p(W,\SG_m(C)):=\min\{ d_p(W,W') \mid W' \in\SG_m(C)\}$.
\end{definition}

Because of \eqref{eq:sym} and since the involution $\iota_m$ preserves the 
projection distance, we have 
\begin{equation}\label{eq:SGdual}
\CG_C(W) = \CG_{\breve{C}}(W^\bot) \; .
\end{equation}
When the reference cone~$C$ is clear from the context
we simply write $\CG=\CC_C$ and similarly for $\PG_m$, $\DG_m$, and $\SG_m$.

\subsection{Main results} \label{se:main}

We first relate the Grassmann condition number to Renegar's condition number. 
Again we fix a regular cone $C\subseteq\R^n$. 

In the following let $\R^{m\times n}_*$ denote the set of $m\times n$-matrices of full rank~$m$. 
A subspace $W\in\GR$ can be represented by a matrix $A\in \IR^{m\times n}_*$ in the form 
$W = \im(A^T) = \ker(A)^\bot$. 
The numerical quality of the matrix $A$ is measured by its condition number 
$\kappa(A) =\|A\| \|A^\dagger\|$, where $A^\dagger\in\R^{n\times m}$ stands for the 
Moore-Penrose pseudoinverse of~$A$. 

We call a matrix $B\in\R^{m\times n}$ {\em balanced} iff $B B^T=I_m$.
It is easy to see that $\kappa(B)=1$ iff $B$ is balanced (cf.\ Lemma~\ref{le:bal}). 
The next result states that $\CG(W)$ equals Renegar's condition number $\CR(B)$ 
for a balanced representation~$B$ of the subspace~$W$. 

\begin{thm}\label{th:main}
For $B\in\IR^{m\times n}$ balanced and $W=\im(B^T)$
we have $\CG(W) =\CR(B)$. 
\end{thm}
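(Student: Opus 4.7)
The plan is to reduce Theorem~\ref{th:main} to the distance identity $d(B,\SR) = d_p(W,\SGm)$. This reduction is immediate: since $BB^T=I_m$, every singular value of $B$ equals~$1$, so $\|B\|=1$ and $\CR(B) = 1/d(B,\SR)$, while $\CG(W) = 1/d_p(W,\SGm)$ by Definition~\ref{def:GCN}.

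At the heart of the argument lies the following distance formula, which I would first establish as a lemma: for every $W'\in\GR$,
\[
   \min\bigl\{\|B-A\| : A\in\R^{m\times n},\; \im(A^T)\subseteq W'\bigr\} \;=\; d_p(W,W'),
\]
attained by $A:=B\Pi_{W'}$. The constraint $\im(A^T)\subseteq W'$ is equivalent to $A\Pi_{W'^\perp}=0$, so for any such $A$ and every unit vector $v\in W'^\perp$ one has $(B-A)v=Bv$, yielding the lower bound $\|B-A\|\ge\|B\Pi_{W'^\perp}\|$; the choice $A:=B\Pi_{W'}$ attains it. Since $BB^T=I_m$ means $B$ acts isometrically on $W$ and vanishes on $W^\perp$, one has $\|Bx\|=\|\Pi_W x\|$ for every $x\in\R^n$, whence $\|B\Pi_{W'^\perp}\|=\|\Pi_W\Pi_{W'^\perp}\|$. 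The latter equals $\sin\theta_{\max}=d_p(W,W')$ by the standard identification of the projection distance with the sine of the largest principal angle between $W$ and $W'$.

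With this distance formula in hand, both inequalities follow quickly. For $d(B,\SR)\le d_p(W,\SGm)$: choose $W'\in\SGm$ realizing the infimum of $d_p(W,\cdot)$ and set $A:=B\Pi_{W'}$; then inspecting \eqref{eq:star1}--\eqref{eq:star2} shows $A\in\SR$, since $\ker(A)\supseteq W'^\perp$ meets $\breve C$ nontrivially (using $W'\in\PG_m$) and either $A$ is rank-deficient (so $A\in\FDR$ automatically) or $\im(A^T)=W'\in\DG_m$. The formula then yields $\|B-A\|=d_p(W,W')=d_p(W,\SGm)$. For $d(B,\SR)\ge d_p(W,\SGm)$: take any $A\in\SR$. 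If $A$ has full rank $m$, then $W':=\im(A^T)\in\SGm$ (again by \eqref{eq:star1}--\eqref{eq:star2}), and the lower bound in the formula gives $\|B-A\|\ge d_p(W,W')\ge d_p(W,\SGm)$. If $A$ is rank-deficient, then Weyl's inequality for singular values gives $\|B-A\|\ge\sigma_m(B)-\sigma_m(A)=1\ge d_p(W,\SGm)$, since projection distances never exceed~$1$.

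The only step requiring genuine care is the identification $\|\Pi_W\Pi_{W'^\perp}\|=d_p(W,W')$, which can be obtained from the CS decomposition of a pair of projections, or directly from the definition $d_p(W,W')=\|\Pi_W-\Pi_{W'}\|$ by exploiting the orthogonal decomposition $\Pi_W-\Pi_{W'}=\Pi_W\Pi_{W'^\perp}-\Pi_{W^\perp}\Pi_{W'}$ whose two summands have orthogonal images. The remaining ingredients—the variational argument for the distance formula, the case analysis on the rank of $A$, and the Weyl estimate—are routine.
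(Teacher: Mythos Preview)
Your proof is correct and takes a genuinely different route from the paper's.

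The paper proceeds \emph{vector-wise}: its key Lemma~\ref{prop:balanced-favor-1} computes, for each nonzero $x\in\R^n$, the minimal norm of a perturbation of $B$ that brings $x$ into $\im(B^T+\Delta^T)$ (respectively $\ker(B+\Delta')$), namely $\sin\sa(x,W)$ (respectively $\sin\sa(x,W^\bot)$). Taking the infimum over $x\in C$ (or $x\in\breve C$) yields $d(B,\SR)=\sin\sa(C,W)$ in the primal case and $\sin\sa(\breve C,W^\bot)$ in the dual case; the link to $\CG(W)$ then goes through the angle characterization of Proposition~\ref{pro:GGsa}. This forces a case split $W\in\PGm$ versus $W\in\DGm$ and relies on Proposition~\ref{pro:GGsa} (hence on Lemma~\ref{le:Ex}) as an intermediate step.

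Your argument is \emph{subspace-wise}: the identity
\[
   \min\{\|B-A\| : \im(A^T)\subseteq W'\}=d_p(W,W'),
\]
attained at $A=B\Pi_{W'}$, lets you compare $d(B,\SR)$ and $d_p(W,\SGm)$ directly, with the rank-deficient part of $\SR$ handled by a one-line Weyl estimate. This avoids the primal/dual case split and bypasses Proposition~\ref{pro:GGsa} entirely; it is shorter and arguably more transparent. On the other hand, the paper's approach yields more as a by-product: Lemma~\ref{prop:balanced-favor-1}(2) exhibits explicit rank-one optimal perturbations (used in Remark~\ref{re:Frobnorm} to conclude $d_F(B,\SR)=d(B,\SR)$), and the intermediate equality $\CG(W)^{-1}=\sin\sa(C,W)$ is of independent interest. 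Your sketch of $\|\Pi_W\Pi_{W'^\perp}\|=d_p(W,W')$ via the decomposition $\Pi_W-\Pi_{W'}=\Pi_W\Pi_{W'^\perp}-\Pi_{W^\perp}\Pi_{W'}$ is correct; note that besides orthogonal images these two summands annihilate $W'$ and $W'^\perp$ respectively, so $(\Pi_W-\Pi_{W'})^T(\Pi_W-\Pi_{W'})$ is block-diagonal with respect to $W'\oplus W'^\perp$, giving the claimed maximum.
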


Now we address the question to what extent $\CR(A)$ deviates from $\CG(W)$,
when we represent the subspace~$W$ by a nonbalanced matrix~$A$ such that 
$W=\im(A^T)$. 
As one may expect, this is quantified by the matrix condition number~$\kappa(A)$. 

\begin{thm}\label{th:BF}
For $A\in\IR^{m\times n}_*$ and $W=\im(A^T)$ we have
$$
  \CG(W) \;\leq\; \CR(A) \;\leq\; \kappa(A)\cdot \CG(W) \; .
$$
\end{thm}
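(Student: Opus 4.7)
The plan is to prove the two inequalities separately: the lower one by a direct construction of an ill-posed matrix near $A$, and the upper one by reduction to Theorem~\ref{th:main} via the $\GL(m)$-invariance of $\SR$ noted in Section~\ref{se:Rene}.

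For the \emph{lower bound} $\CG(W)\le\CR(A)$, I may assume $d_p(W,\SG_m)<1$, since otherwise $\CG(W)\le 1\le\CR(A)$ by~\eqref{def:RCN}. As $\SG_m$ is compact by Proposition~\ref{pro:Gtop}(1), some $W'\in\SG_m$ attains $d_p(W,W')=d_p(W,\SG_m)$. I would set $A':=A\,\Pi_{W'}$. Using $\ker A=W^\bot$, hence $A=A\,\Pi_W$, one obtains
\[
 \|A-A'\| \;=\; \|A(\Pi_W-\Pi_{W'})\| \;\le\; \|A\|\cdot d_p(W,W') \;=\; \|A\|/\CG(W).
\]
Since $d_p(W,W')<1$ forces $W\cap(W')^\bot=\{0\}$, the restriction $\Pi_{W'}|_W\colon W\to W'$ is a bijection, so $A'$ has full rank and $\im((A')^T)=\Pi_{W'}(W)=W'$. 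The characterization~\eqref{eq:charS} combined with the alternative~\eqref{eq:alt} then gives $W'\cap C\ne\{0\}$ and $(W')^\bot\cap\breve C\ne\{0\}$, so $A'\in\FPR\cap\FDR=\SR$. Hence $d(A,\SR)\le\|A\|/\CG(W)$, i.e., $\CR(A)\ge\CG(W)$.

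For the \emph{upper bound} $\CR(A)\le\kappa(A)\,\CG(W)$, set $M:=(AA^T)^{-1/2}$ (well defined since $A$ has full row rank) and $B:=MA$. A direct computation gives $BB^T=I_m$, so $B$ is balanced with $\im(B^T)=W$, whence Theorem~\ref{th:main} yields $\CR(B)=\CG(W)$. For any $A''\in\SR$ the $\GL(m)$-invariance of $\SR$ gives $MA''\in\SR$, so $d(B,\SR)\le\|M(A-A'')\|\le\|M\|\cdot\|A-A''\|$; minimizing over $A''$ yields $d(A,\SR)\ge d(B,\SR)/\|M\|$. Combined with $\|A\|=\|M^{-1}B\|\le\|M^{-1}\|\|B\|$, this gives
\[
 \CR(A) \;=\; \frac{\|A\|}{d(A,\SR)} \;\le\; \frac{\|M^{-1}\|\,\|B\|}{d(B,\SR)/\|M\|} \;=\; \kappa(M)\cdot\CR(B).
\]
Since the singular values of $M$ are the reciprocals of those of $A$, one has $\|M\|=\|A^\dagger\|$ and $\|M^{-1}\|=\|A\|$, so $\kappa(M)=\kappa(A)$; substituting $\CR(B)=\CG(W)$ finishes the argument.

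The only delicate point is the verification, in the lower-bound argument, that $A'=A\,\Pi_{W'}$ has full rank with $\im((A')^T)=W'$ (rather than merely a proper subspace of $W'$), since only then does the ill-posedness of $W'$ translate into $A'\in\SR$; this rests on the standard fact that $d_p(W,W')<1$ makes $\Pi_{W'}|_W$ an isomorphism. The rest of the proof is bookkeeping with Theorem~\ref{th:main} and the $\GL(m)$-invariance of $\SR$.
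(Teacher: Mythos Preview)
Your proof is correct. The upper bound argument is essentially the paper's: both use the balanced approximation $B=(AA^T)^{-1/2}A$, Theorem~\ref{th:main}, and the $\GL(m)$-invariance of $\SR$ to compare $d(A,\SR)$ with $d(B,\SR)$.

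Your lower bound, however, takes a genuinely different route. The paper argues symmetrically: it takes $\tilde B\in\SR$ realizing $d(B,\SR)$, pushes it forward to $\tilde A:=S\tilde B\in\SR$ via $S=(AA^T)^{1/2}$, and bounds $d(A,\SR)\le\|S\|\,d(B,\SR)=\|A\|\,d(B,\SR)$, again invoking Theorem~\ref{th:main}. You instead work directly from the Grassmannian definition: pick $W'\in\SG_m$ realizing $d_p(W,\SG_m)$ and set $A':=A\,\Pi_{W'}$. The verification that $A'\in\SR$ with $\im((A')^T)=W'$ (resting on $d_p(W,W')<1\Rightarrow W\cap(W')^\bot=W'\cap W^\bot=\{0\}$) is clean, and your appeal to~\eqref{eq:charS} and~\eqref{eq:alt} to translate $W'\in\SG_m$ into $A'\in\FPR\cap\FDR$ is exactly right. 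This half of your argument is self-contained and does not use Theorem~\ref{th:main} at all, which is a minor conceptual gain; the paper's version, on the other hand, has the virtue of treating both inequalities by the same mechanism and making the role of the polar factor $S$ completely transparent.
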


By Theorem~\ref{th:main}, the left-hand inequality states that 
$\CR(B) \le \CR(A)$ where $B$ is balanced and $\im(B^T)=\im(A^T)$.
The right-hand inequality 
expresses the fact that a large condition $\CR(A)$ is either caused by a large $\CG(W)$, 
i.e., $W$ meeting/missing $C$ at small angle,
or caused by a large $\kappa(A)$, i.e., a badly conditioned matrix $A$ representing the subspace~$W$. 

\begin{remark}\label{re:CBA}
As a preconditioning process, we may replace the input matrix~$A$ by a balanced matrix~$B$ such 
that $\im(A^T)=\im(B^T)$. 
For instance, this may be achieved by a Gram-Schmidt orthogonalization 
of the columns of $A$. Then, for the three families of cones~$C$ 
corresponding to linear, second-order or semidefinite programming, 
we may apply the primal-dual interior-point method of~\cite{VRP:07} to decide the 
alternative (P)/(D) on input~$B$ with a number of interior-point iterations bounded by
$O\big(\sqrt{n}(\ln (n\CG(W)\big)$. 
This is true since 
$\CG(W)=\CR(B)$ according to Theorem~\ref{th:main}. 
\end{remark}

Theorem~\ref{th:BF} allows to break up the probabilistic study of $\CR$ 
into the study of the geometric condition~$\CG$ and the matrix condition $\kappa$. 
In particular, for random matrices~$A$ we have
$\Ex\log \CR(A) \leq \Ex\log\kappa(A) + \Ex\log \CG(A)$. 

In  the forthcoming paper~\cite{ambu:11} we will, based on methods from 
differential and spherical convex geometry, give tight bounds on 
the tail probability of $\CG$ and the expectation $\Ex\log \CG(A)$ for random $A\in\R^{m\times n}$ 
with independent standard Gaussian entries, for any regular cone~$C$. 

We will see next that the Grassmann condition $\CG(W)$ can be characterized 
in terms of angles. 
The {\em angle} $\sa(x,y)$ between two vectors $x,y\in\R^n\setminus \{0\}$ is defined by 
$\sa(x,y) := \arccos\big(\frac{x^Ty}{\|x\| \|y\|}\big)$, 
where $\|\;\|$ denotes the Euclidean norm. 
We define the angle between~$x$ and a subspace $W\in\GR$ by
$\sa(x,W) := \min\{\sa(x,y) \mid y \in W\setminus \{0\}\}$.
It is easy to see that 
$\sa(x,W) = \arccos\big(\|\Pi_W(x)\| /\|x\|\big)$, 
where $\Pi_W$ denotes the orthogonal projection onto~$W$.
Note that $\sa(x,W) \in [0,\pi/2]$.
If $W\cap C=\{0\}$, we define the angle between the cone~$C$ and 
the subspace~$W$ via
 $\sa(C,W) := \min\{ \sa(x,W) \mid x\in C\setminus \{0\}\}$.

\begin{proposition}\label{pro:GGsa}
We have
  \[ \CG(W)^{-1} = \begin{cases}
                      \sin\sa(C,W) \; , & \text{if } W\in\PGm \; ,
                   \\ \sin\sa(\breve{C},W^\bot) \; , & \text{if } W\in\DGm \; .
                   \end{cases} \]
\end{proposition}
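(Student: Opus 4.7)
The plan is to reduce to a single case via duality, then prove $d_p(W,\SGm) = \sin\sa(C,W)$ for $W\in\PGm$ via matching inequalities, with the upper bound coming from an explicit rotational construction of a nearby ill-posed subspace. Using \eqref{eq:sym} (in particular $\iota_m(\DG_m(C)) = \PG_{n-m}(\breve{C})$ and $\iota_m(\SG_m(C)) = \SG_{n-m}(\breve{C})$) together with $\CG_C(W) = \CG_{\breve{C}}(W^\perp)$ from \eqref{eq:SGdual}, the dual case $W\in\DGm$ reduces to the primal case applied to $\breve{C}$ and $W^\perp\in\PG_{n-m}(\breve{C})$. So it suffices to handle $W\in\PGm$; the subcase $W\in\SGm$ is trivial (both sides vanish), so I assume $W\in\PGm\setminus\SGm$. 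Then $W\cap C=\{0\}$ by \eqref{eq:charS} and $W\cap\inte(C)=\emptyset$ by \eqref{eq:alt}.

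The lower bound $\sin\sa(C,W) \le d_p(W,\SGm)$ is routine: for any $W'\in\SGm$ and $x\in (W'\cap C)\setminus\{0\}$, the facts $\Pi_{W'^\perp}x = 0$ and $\Pi_{W^\perp}-\Pi_{W'^\perp} = \Pi_{W'}-\Pi_W$ give $\|\Pi_{W^\perp}x\| \le d_p(W,W')\,\|x\|$, i.e., $\sin\sa(x,W) \le d_p(W,W')$. Since $\sa(C,W) \le \sa(x,W)$, minimizing over $W'$ finishes this direction.

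For the upper bound I would exhibit $W'\in\SGm$ with $d_p(W,W') = \sin\sa(C,W)$. Let $\theta := \sa(C,W)$ and pick, by compactness, a unit minimizer $x^\ast\in C\setminus\{0\}$. A Lagrange-multiplier argument shows $x^\ast\in\partial C$: any interior minimizer on $S^{n-1}$ would be an eigenvector of $\Pi_W$, hence in $W\cup W^\perp$, which contradicts either $W\cap C=\{0\}$ or $\theta < \pi/2$ (the latter holding since $C$ is full-dimensional). Set $y^\ast := \Pi_W x^\ast/\|\Pi_W x^\ast\| \in W$, $u := (x^\ast - \cos\theta\,y^\ast)/\sin\theta \in W^\perp$, and $W_0 := W\cap\{y^\ast\}^\perp$. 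One checks $W_0\perp x^\ast$, making $W' := W_0\oplus\lin(x^\ast)$ an $m$-dimensional subspace; a direct computation in the orthonormal basis $(y^\ast,u)$ of $\lin(y^\ast,x^\ast)$ gives $\Pi_W - \Pi_{W'} = y^\ast(y^\ast)^T - x^\ast(x^\ast)^T$ with spectral norm $\sin\theta$, so $d_p(W,W') = \sin\theta$.

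The main obstacle is showing $W'\in\SGm$. Since $x^\ast\in W'\cap C$, we have $W'\in\DGm$; the crux is $W'\cap\inte(C) = \emptyset$. For any $v = w_0 + \alpha x^\ast\in W'$ (with $w_0\in W_0$, $\alpha\in\R$), a calculation yields
\[
  \sin^2\sa(v,W) \;=\; \frac{\alpha^2\sin^2\theta}{\|w_0\|^2 + \alpha^2}.
\]
If $v\in C$ then $\sa(v,W)\ge\theta$ forces $w_0 = 0$, so $v\in\R x^\ast$; pointedness of $C$ then yields $v\in\R_{\ge 0}\,x^\ast$. Hence $W'\cap C = \R_{\ge 0}\,x^\ast$, which avoids $\inte(C)$ since $x^\ast\in\partial C$, giving $W'\in\SGm$ via \eqref{eq:charS}. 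This closes the upper bound.
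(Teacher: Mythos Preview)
Your proof is correct but takes a genuinely different route from the paper's.

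The paper first establishes the auxiliary Lemma~\ref{le:Ex}: for the set $\Es_x=\{W'\in\GR\mid x\in W'\}$ one has $d_p(W,\Es_x)=\sin\sa(x,W)$. Writing $\DGm=\bigcup_{x\in C\setminus\{0\}}\Es_x$ then gives $d_p(W,\DGm)=\sin\sa(C,W)$. The passage from $\DGm$ to $\SGm$ is handled by the topological Proposition~\ref{pro:Gtop} ($\partial\DGm=\SGm$), which implies $d_p(W,\SGm)=d_p(W,\DGm)$ whenever $W\in\PGm$.

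You bypass both auxiliary results and work directly with $\SGm$. Your lower bound is an elementary projection estimate. For the upper bound, rather than appeal to $\partial\DGm=\SGm$, you construct the candidate $W'$ explicitly from the angle-minimizer $x^\ast$ and then \emph{verify} $W'\in\SGm$ by (i) a Lagrange/eigenvector argument forcing $x^\ast\in\partial C$, and (ii) the angle identity $\sin^2\sa(w_0+\alpha x^\ast,W)=\alpha^2\sin^2\theta/(\|w_0\|^2+\alpha^2)$, which pins down $W'\cap C=\R_{\ge 0}x^\ast$.

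The trade-off: the paper's approach is more modular---Lemma~\ref{le:Ex} is reused for Theorem~\ref{th:CGcharDG}, and Proposition~\ref{pro:Gtop} absorbs all the ``boundary'' reasoning once and for all. Your approach is more self-contained and yields sharper information (the exact intersection $W'\cap C$), but the optimality argument for $x^\ast$ is work that the paper avoids by the topological route.
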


\begin{remark}
In the dual feasible case,
the Grassmann condition as characterized in Proposition~\ref{pro:GGsa} 
was already considered by Belloni and Freund in~\cite[eq.~(3)]{BF:09}
and also the inequalities in Theorem~\ref{th:BF} 
were derived. What is missing in~\cite{BF:09} is the treatment of the primal feasible case,
and the geometric viewpoint in the Grassmann manifold, 
which leads to a completely transparent picture with regard to duality. 
\end{remark}

There is a natural Riemannian metric on  the Grassmann manifold $\GR$ that is invariant under the action of $O(n)$ and 
which is uniquely determined up to a scaling factor~\cite{helga}. 
The \emph{geodesic distance} $d_g(W_1,W_2)$ between $W_1,W_2\in\Gr_{n,m}$ is defined as the minimum 
length of a piecewise smooth curve in $\Gr_{n,m}$ connecting $W_1$ with $W_2$. 
One can nicely express $d_g(W_1,W_2)$ in terms of the principle angles between these subspaces, 
see~\eqref{eq:d_gd} below.

The following result states that measuring the distance of $W$ to $\SG_m(C)$ either with 
the projective distance or the geodesic distance leads to the same result.
The resulting differential-geometric characterization of the Grassmann condition is the key 
to the probabilistic analysis of $\CG(W)$ in~\cite{ambu:11}. 
We write $d_g(W,\SG_m) :=\min\{ d_g(W,W') \mid W' \in\SG_m\}$.
 
\begin{thm}\label{th:CGcharDG}
We have $d_p(W,\SG_m) = \sin d_g(W,\SG_m)$ for $W\in\Gr_{n,m}$. 
\end{thm}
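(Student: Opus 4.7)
My plan is to prove the equality by establishing both inequalities separately, leveraging the expressions of $d_p$ and $d_g$ in terms of principal angles: if $W_1,W_2\in\Gr_{n,m}$ have principal angles $0\le\theta_1\le\cdots\le\theta_m\le\pi/2$, then $d_p(W_1,W_2)=\sin\theta_m$ (cf.\ \cite[\S2.6]{GoLoan}) and $d_g(W_1,W_2)=\big(\sum_{i=1}^m\theta_i^2\big)^{1/2}$ for the canonical $O(n)$-invariant metric. Since both distances are preserved by $\iota_m$, by~\eqref{eq:sym} it suffices to treat $W\in\PG_m$. If $W\in\SG_m$ both sides vanish, so assume $W\in\PG_m\setminus\SG_m$ and set $\alpha:=\sa(C,W)>0$. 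By Proposition~\ref{pro:GGsa}, $d_p(W,\SG_m)=\sin\alpha$, so the task reduces to showing $d_g(W,\SG_m)=\alpha$.

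For the lower bound $d_g(W,\SG_m)\ge\alpha$, I fix $W^*\in\SG_m$, let $\theta_m$ be the largest principal angle between $W$ and $W^*$, and pick a unit vector $x\in W^*\cap C$. Since $\Pi_{W^*}(x)=x$,
$$\sin\sa(x,W)=\|x-\Pi_W(x)\|=\|(\Pi_{W^*}-\Pi_W)(x)\|\le d_p(W,W^*)=\sin\theta_m,$$
so $\sa(x,W)\le\theta_m$; combined with $\sa(x,W)\ge\alpha$, this yields $\alpha\le\theta_m\le d_g(W,W^*)$, and the bound follows on taking infimum over $W^*\in\SG_m$.

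For the upper bound $d_g(W,\SG_m)\le\alpha$, I choose a unit vector $x_0\in C$ with $\sa(x_0,W)=\alpha$, set $p:=\Pi_W(x_0)/\|\Pi_W(x_0)\|\in W$ and $q:=(x_0-\Pi_W(x_0))/\|x_0-\Pi_W(x_0)\|\in W^\bot$ (so that $x_0=\cos\alpha\cdot p+\sin\alpha\cdot q$), and consider the curve
$$\gamma(t):=(W\cap p^\bot)\oplus\mathrm{span}(\cos t\cdot p+\sin t\cdot q),\qquad t\in[0,\alpha],$$
which is a unit-speed geodesic in $\Gr_{n,m}$ with $\gamma(0)=W$ and $x_0\in\gamma(\alpha)$, hence $\gamma(\alpha)\in\DG_m$. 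Because $\PG_m$ is closed (Proposition~\ref{pro:Gtop}), the set $T:=\{t\in[0,\alpha]:\gamma(t)\in\PG_m\}$ is closed and contains $0$; let $t^*:=\max T$. If $t^*=\alpha$, then $\gamma(\alpha)\in\PG_m\cap\DG_m=\SG_m$; otherwise $\gamma(t)\notin\PG_m$ for $t\in(t^*,\alpha]$, so $\gamma(t)\in\DG_m$ by $\Gr_{n,m}=\PG_m\cup\DG_m$, and closedness of $\DG_m$ forces $\gamma(t^*)\in\DG_m$, hence $\gamma(t^*)\in\SG_m$. Either way $d_g(W,\SG_m)\le t^*\le\alpha$. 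Combining the two bounds and using monotonicity of $\sin$ on $[0,\pi/2]$ gives the theorem. I expect the main technical subtlety to lie in this ``intermediate-value'' argument, which critically uses the boundary description $\partial\PG_m=\partial\DG_m=\SG_m$ from Proposition~\ref{pro:Gtop}.
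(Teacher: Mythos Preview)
Your proof is correct and follows the paper's overall plan: reduce to $W\in\PG_m$ via the isometry $\iota_m$ and the duality relations~\eqref{eq:sym}, identify $d_p(W,\SG_m)=\sin\alpha$ with $\alpha=\sa(C,W)$ via Proposition~\ref{pro:GGsa}, and show $d_g(W,\SG_m)=\alpha$. Your lower bound is a streamlined version of the second half of Lemma~\ref{le:Ex}: the paper computes the smallest singular value of $\tilde{B}B^T$, while your one-line estimate $\sin\sa(x,W)=\|(\Pi_{W^*}-\Pi_W)x\|\le d_p(W,W^*)$ achieves the same thing more directly.

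The genuine difference is in the upper bound. The paper, via Lemma~\ref{le:Ex}, constructs only the endpoint $W':=(W\cap p^\bot)\oplus\R x_0$ (your $\gamma(\alpha)$), notes $d_g(W,W')=\alpha$ by Lemma~\ref{le:dga}, hence $d_g(W,\DG_m)\le\alpha$, and then passes from $\DG_m$ to $\SG_m$ using that for $W\notin\DG_m$ the nearest point in the closed set $\DG_m$ must lie on $\partial\DG_m=\SG_m$. You instead trace the full geodesic $\gamma$ and use an intermediate-value argument to land directly in $\SG_m$. Both routes are valid; the paper's is more economical (a single subspace suffices), while yours avoids the implicit ``closest boundary point'' step at the cost of a topological argument. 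One small point you should make explicit: in defining $p$ you need $\Pi_W(x_0)\ne 0$, i.e.\ $\alpha<\pi/2$; this holds because $C$ has nonempty interior and hence cannot be contained in the proper subspace $W^\bot$.
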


The remainder of the paper is devoted to the proofs of the results
stated in this section.
The proof of Theorem~\ref{th:CGcharDG} will be given in Section~\ref{se:GRdist},
whereas the proofs of the remaining statements will be given in Section~\ref{sec:matrix-perturb}.
The paper closes with a comparison of the Grassmann condition with 
the GCC condition for the cone $C=\IR_+^n$.

\bigskip
\noindent{\bf Acknowledgments:} We are grateful to the criticism of the anonymous referees, which 
led to an improved presentation of the paper. 

\section{Preliminaries}\label{se:prelim}

We first recall the fundamental notions of principle angles between linear subspaces,
going back to Jordan~\cite{Jord}.  

Let $W_1,W_2\in\GR$ and let $B_i\in\R^{m\times n}$ be balanced such that $W_i=\im(B_i^T)$, 
i.e., the rows of $B_i$ form an orthonormal basis of $W_i$.
Let $\s_1\ge\ldots\ge\s_m$ denote the singular values of $B_1B_2^T$. 
Note that $\s_1 =\|B_1B_2^T\| \le \|B_1\|\,\|B_2^T\|=1$. 
The {\em principal angles} $\a_1,\ldots,\a_m$ between $W_1$ and $W_2$ 
are defined as $\a_i:=\arccos\s_i\in [0,\pi/2]$, cf.~\cite{GB:73}.

The principal angles depend only on the pair $W_1,W_2$ of subspaces. 
Their relevance derives from the known fact~\cite{Jord,Wo:67} 
that two pairs of subspaces in $\GR$ lie in the same $O(n)$-orbit iff they 
have the same (ordered) vector~$\a=(\a_1,\ldots,\a_m)$ of principal angles. 
It follows that any orthogonal invariant metric on $\GR$ is expressible in 
terms of the principle angles. In fact, the following is true:
\begin{equation}\label{eq:d_gd}
 d_p(W_1,W_2) =  \sin\|\a\|_\infty = \sin \max_i \alpha_i ,\quad 
 d_g(W_1,W_2) = \|\a\|_2 = \sqrt{\alpha_1^2+\ldots+\alpha_m^2} .
\end{equation}
See ~\cite[\S12.4.3]{GoLoan} or~\cite[\S5.3]{Stewart} for a proof of the 
first equality and~\cite{Wo:67} for a proof of the second one. 
It follows that 
$d_p(W_1,W_2) \le \sin d_g(W_1,W_2)$. 

The nonzero principal angles between 
$W_1$ and $W_2$ coincide with the nonzero principal angles between 
$W_1^\bot$ and $W_2^\bot$, cf.~for example~\cite[Thm.~3]{MB:92}.
Combined with \eqref{eq:d_gd}, this yields
\begin{equation}\label{eq:perp}
 d_p(W_1^\bot,W_2^\bot) = d_p(W_1,W_2), \quad
 d_g(W_1^\bot,W_2^\bot) = d_g(W_1,W_2) . 
\end{equation}

We shall need the polar decomposition of a matrix.
For any $A\in\R^{m\times n}_*$ we define $S=\sqrt{AA^T}$ and $B=S^{-1}A$.
Then $S$ is positive definite and 
$BB^T = S^{-1}AA^TS^{-1}=I$, hence $B$ is balanced. 
One calls $A=SB$ the {\em polar decomposition} of~$A$, cf.~\cite[\S4.2.10]{GoLoan}. 
It is clear that $A$ and $S$ have the same singular values~\cite[\S2.5.3]{GoLoan}
$\s_1\ge\cdots\ge\s_m >0$.
In particular, 
$\|A\| = \s_1 =\|S\|$ and $\|A^\dagger\| = \s_m^{-1 }=\|S^{-1}\|$.  
We shall call $B$ the \emph{balanced approximation} of~$A$. 
Replacing $A$ by its balanced approximation~$B$ may be interpreted as 
a preconditioning process. 

A linear map between Euclidean vector spaces 
is called {\em isometrical} iff it preserves the inner product 
between vectors.
The next lemma is well known and summarizes some of the defining properties 
of balanced matrices. We include a proof for lack of a suitable reference. 

\begin{lemma}\label{le:bal} 
For a matrix $A\in\R^{m\times n}_*$ the following conditions are equivalent:                                                                                                                     
\begin{enumerate}
\item[(1)] $A$ is balanced,
\item[(2)]  the map $A^T\colon\R^m\to\R^n$ is isometrical,
\item[(3)] $A$ restricted to $\ker(A)^\bot$ is isometrical,
\item[(4)] $\kappa(A) =\|A\| =1$.
\end{enumerate}
In this case, $A^TA$ equals the orthogonal projection onto 
$\im(A^T) =\ker(A)^\bot$. 
\end{lemma}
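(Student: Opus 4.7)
The plan is to reduce all four equivalences to the single identity $AA^T=I_m$ by short bidirectional arguments, and then derive the closing statement as a consequence. Since everything here is routine linear algebra, I do not foresee a real obstacle; the only subtle point is that the proof of (3) $\Rightarrow$ (1) must avoid invoking the polar decomposition $A=SB$, because $B$ being balanced is exactly what we are trying to characterize.

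First I would dispatch (1) $\Leftrightarrow$ (2) by polarization, noting that $A^T$ preserves inner products iff $x^T AA^T y = x^Ty$ for all $x,y\in\R^m$, which is $AA^T=I_m$. Next, (1) $\Leftrightarrow$ (4) follows from the SVD: writing the singular values of $A$ as $\s_1\geq\cdots\geq\s_m>0$, one has $\|A\|=\s_1$ and $\|A^\dagger\|=\s_m^{-1}$, so $\kappa(A)=\|A\|=1$ forces $\s_1=\cdots=\s_m=1$, which via the SVD of $AA^T$ is equivalent to $AA^T=I_m$.

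The remaining equivalence (1) $\Leftrightarrow$ (3) uses that $A$ has full rank~$m$, so $A^T\colon\R^m\to\ker(A)^\bot$ is a linear bijection onto $\im(A^T)=\ker(A)^\bot$. For (1) $\Rightarrow$ (3), every $v\in\ker(A)^\bot$ is of the form $v=A^Ty$ with $Av=AA^Ty=y$, and the already-established isometry of $A^T$ gives $\|Av\|=\|y\|=\|v\|$. For (3) $\Rightarrow$ (1) I would set $M:=AA^T$ (symmetric, and invertible by full rank) and apply the isometry hypothesis to vectors $v=A^Ty$ to obtain $y^TM^2y = \|Av\|^2 = \|v\|^2 = y^TMy$ for all $y\in\R^m$, hence $M^2=M$ and thus $M=I_m$. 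The closing statement is then immediate: under (1), the matrix $P:=A^TA$ is symmetric with $P^2=A^T(AA^T)A=A^TA=P$, so $P$ is an orthogonal projection; since $PA^Ty=A^T(AA^T)y=A^Ty$ for all $y$, its image is precisely $\im(A^T)=\ker(A)^\bot$.
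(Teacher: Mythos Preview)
Your proof is correct and follows essentially the same route as the paper's: both reduce everything to $AA^T=I_m$, use the singular-value characterization for (4), and for (3)$\Rightarrow$(1) derive $M^2=M$ with $M=AA^T$ and conclude from invertibility. The only cosmetic differences are that the paper verifies (1)$\Leftrightarrow$(2) via the standard basis rather than polarization, and proves the final projection statement by decomposing $x\in\R^n$ along $\ker(A)\oplus\im(A^T)$ instead of invoking the symmetric-idempotent criterion as you do.
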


\begin{proof}
$A^T$ is isometrical iff 
$(A^T e_i)^T (A^T e_j) = \delta_{ij}$ for all $i,j$. 
This is equivalent to $e_i^TAA^Te_j =\delta_{ij}$, 
or $AA^T=I_m$, which means that $A$ is balanced. 
We have thus verified the equivalence of (1) and (2). 

We show now the equivalence of (1) and (3). 
Suppose that $A$ is balanced. Then we have 
$(A A^Ty_1)^T A A^Ty_2 = y_1^T y_2 = (A^Ty_1)^T (A^T y_2) $ 
for all $y_1,y_2\in\R^m$. 
Hence the map $A$ restricted to $\ker(A)^\bot=\im A^T$ 
is isometrical.

To see the converse, suppose that  
$(A A^Ty_1)^T A A^Ty_2 = (A^Ty_1)^T (A^T y_2)$.
This means that $AA^TAA^T= AA^T$. 
Since $AA^T$ is invertible we get $AA^T=I_m$. 

To see the equivalence of (1) and (4) suppose that 
$A$ is balanced. Then $\|A\|=1$ by (2) and 
$\|A^\dagger\|=1$ by (3). 
Conversely, assume that $\|A\| = \|A^\dagger\|=1$. 
Since $\|A\|$ is the largest and  $\|A^\dagger\|^{-1}$ is 
the smallest singular value of $A$, it follows 
that $A=U\,(I_m \; 0)\,V$ for orthogonal matrices $U\in O(m)$ and 
$V\in O(n)$, cf.~\cite{GoLoan}. Hence (2) and thus (1) is true. 

For the last assertion, let $x=x_1+x_2$ with $x_1\in\ker(A)$ and 
$x_2\in\im(A^T)$, say $x_2=A^Ty_2$. 
Then $A^TAx_1=0$ and $A^TAx_2= A^TAA^Ty_2=A^Ty_2=x_2$. 
Hence $A^TA$ equals the orthogonal projection onto $\im(A^T)$. 
\end{proof}

\begin{remark}\label{re:EY}
Recall that $\RD$ denotes the set of rank deficient matrices in  $\R^{m\times n}$.  
The Eckart-Young Theorem~\cite{EckYou}, see also~\cite[\S2.5.5]{GoLoan},  
states that $d(A,\RD) = \|A^\dagger\|^{-1}$,
where $A^\dagger$ denotes the Moore-Penrose pseudoinverse of $A$. 
This implies that $d(A,\RD)=1$ iff $A$ is balanced. 
\end{remark}

\section{Distances in the Grassmann manifold}\label{se:GRdist}

We show here that different choices of distances in the Grassmann manifold 
lead to the same notion of Grassmann condition. 

\begin{lemma}\label{le:dga}
Let $W_1,W_2\in\GR$ such that $\overline{W}:=W_1\cap W_2$ has the dimension~$m-1$
and let the line $L_i$ denote the orthogonal 
complement of $\overline{W}$ in $W_i$.
Then the principal angles $\alpha_1\leq \ldots\leq\alpha_m$ between $W_1$ and $W_2$
are given by $\alpha_1=\ldots=\alpha_{m-1}=0$ and
$\alpha_m=\sa(L_1,L_2)$. 
Moreover, $d_p(W_1,W_2) = \sin\a_m$ and $d_g(W_1,W_2) =\a_m$.
\end{lemma}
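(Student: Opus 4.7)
\medskip

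\noindent\textbf{Proof proposal for Lemma~\ref{le:dga}.}

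The plan is to exhibit particularly convenient balanced representatives of $W_1$ and $W_2$ and read off the singular values of their product directly. Pick an orthonormal basis $e_1,\ldots,e_{m-1}$ of $\overline{W}=W_1\cap W_2$, and extend it to orthonormal bases of $W_1$ and $W_2$ by appending unit vectors $u_1\in L_1$ and $u_2\in L_2$ respectively. This works because $L_i$ is the orthogonal complement of $\overline{W}$ inside $W_i$ and has dimension $1$.

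Let $B_i\in\R^{m\times n}$ be the balanced matrix whose rows are $e_1,\ldots,e_{m-1},u_i$, so that $W_i=\im(B_i^T)$. The key computation is to evaluate the entries of $B_1B_2^T$. For $1\le i,j\le m-1$ we have $e_i^Te_j=\delta_{ij}$; for $i\le m-1$ we have $e_i^Tu_2=0$ since $u_2\in L_2\perp\overline{W}$, and similarly $u_1^Te_j=0$; finally $u_1^Tu_2=\cos\sa(L_1,L_2)$ by definition of the angle between two lines. Hence
\[
 B_1B_2^T \;=\; \mathrm{diag}\bigl(1,\ldots,1,\cos\sa(L_1,L_2)\bigr).
\]

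The singular values of this diagonal matrix, sorted in decreasing order, are $1\ge\cdots\ge 1\ge \cos\sa(L_1,L_2)$, since $\sa(L_1,L_2)\in[0,\pi/2]$ and hence $\cos\sa(L_1,L_2)\in[0,1]$. By definition of the principal angles (arccosines of these singular values, in decreasing order of the $\sigma_i$), we obtain $\alpha_1=\cdots=\alpha_{m-1}=0$ and $\alpha_m=\sa(L_1,L_2)$, as claimed. The formulas $d_p(W_1,W_2)=\sin\alpha_m$ and $d_g(W_1,W_2)=\alpha_m$ then follow immediately from \eqref{eq:d_gd}, since $\|\alpha\|_\infty=\alpha_m$ and $\|\alpha\|_2=\sqrt{0+\cdots+0+\alpha_m^2}=\alpha_m$.

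There is really no major obstacle here; the argument is essentially a one-line computation of $B_1B_2^T$, and the only point requiring care is the orthogonality relations $e_i\perp u_1,u_2$, which are built into the choice of $L_i$ as the orthogonal complement of $\overline{W}$ in $W_i$.
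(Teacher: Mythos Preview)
Your proof is correct and essentially identical to the paper's: both choose balanced matrices whose rows consist of an orthonormal basis of $\overline{W}$ together with a unit vector in $L_i$, compute $B_1B_2^T$ as a diagonal matrix, and read off the principal angles, then invoke~\eqref{eq:d_gd}. The only cosmetic point is that $u_1^Tu_2=\cos\sphericalangle(L_1,L_2)$ requires choosing the sign of (say) $u_2$ so that $u_1^Tu_2\ge 0$; the paper makes this explicit by stipulating $\sphericalangle(x_1,x_2)=\sphericalangle(L_1,L_2)$, but either way the singular value is $|u_1^Tu_2|$ and the conclusion is unaffected.
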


\begin{proof}
Let $x_i\in L_i$, $i=1,2$, such that $\|x_1\|=\|x_2\|=1$ and $\sa(x_1,x_2)=\sa(L_1,L_2)$.
If the rows of $B_i\in\IR^{m\times n}$ consist of $x_i$ and an orthonormal basis of $\overline{W}$, 
then $B_i$ is balanced and $\im(B_i^T)=W_i$. 
We have $B_1B_2^T=\begin{pmatrix} x_1^Tx_2 & 0 \\ 0 & I_{m-1}\end{pmatrix}$. 
Hence the vector of principal angles between $W_1$ and $W_2$ equals
$(0,\ldots,0,\a_m)$, where  $\a_m=\arccos|x_1^Tx_2|$. 
Equation~\eqref{eq:d_gd} implies $d_p(W_1,W_2)= \sin\a_m$ and 
$d_g(W_1,W_2)= \a_m$ as claimed. 
 \end{proof}

\begin{lemma}\label{le:Ex}
For $x\in\R^n\setminus \{0\}$ set 
$\Es_x:=\{ W'\in\Gr_{n,m} \mid x\in W'\}$. Then we have for $W\in\GR$
$$
 d_p(W,\Es_x) =  \sin d_g(W,\Es_x) = \sin\sa(x,W) \; .
$$ 
\end{lemma}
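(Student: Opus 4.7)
The plan is to prove both equalities by exhibiting an explicit candidate in $\Es_x$ that realizes $\sin \sa(x, W)$ as its projection distance to $W$, and then supplying a matching pointwise lower bound on $d_p$ that also transfers to $d_g$ via the principal-angle characterization~\eqref{eq:d_gd}.

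First I would dispense with the trivial case $x \in W$, where $W \in \Es_x$ and $\sa(x, W) = 0$ make all three quantities vanish. Otherwise set $\alpha := \sa(x, W) \in (0, \pi/2]$ and $\bar{x} := \Pi_W(x)$. Let $L := \R \bar{x}$ (or any line in $W$ in case $\bar{x} = 0$), and let $\overline{W}$ be its orthogonal complement inside $W$. Setting $W^* := \overline{W} + \R x$, the assumption $x \notin W$ gives $\dim W^* = m$. Since $x - \bar{x} \in W^\bot$ is orthogonal to $\overline{W}$, the line $\R x$ is the orthogonal complement of $\overline{W}$ in $W^*$, and $W \cap W^* = \overline{W}$ has dimension $m-1$. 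Lemma~\ref{le:dga} then yields
$$
 d_p(W, W^*) = \sin \sa(L, \R x) = \sin \alpha, \qquad d_g(W, W^*) = \alpha,
$$
supplying the upper bounds $d_p(W, \Es_x) \le \sin \alpha$ and $d_g(W, \Es_x) \le \alpha$.

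For the matching lower bound on $d_p$, any $W' \in \Es_x$ satisfies $\Pi_{W'}(x) = x$, so $(\Pi_W - \Pi_{W'})(x/\|x\|) = (\bar{x} - x)/\|x\|$ has Euclidean norm $\|x - \bar{x}\|/\|x\| = \sin \alpha$. Hence $d_p(W, W') = \|\Pi_W - \Pi_{W'}\| \ge \sin \alpha$ for every $W' \in \Es_x$, giving $d_p(W, \Es_x) \ge \sin \alpha$, which together with the upper bound settles the first equality.

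For the geodesic distance, I would invoke~\eqref{eq:d_gd}: for any $W' \in \Es_x$ let $\alpha_m \in [0, \pi/2]$ denote the largest principal angle between $W$ and $W'$, so $\sin \alpha_m = d_p(W, W') \ge \sin \alpha$ by the previous step. Since $\sin$ is monotonic on $[0, \pi/2]$ and $\alpha \in [0, \pi/2]$, this forces $\alpha_m \ge \alpha$, and therefore $d_g(W, W') = \|(\alpha_1, \ldots, \alpha_m)\|_2 \ge \alpha_m \ge \alpha$. Combined with the upper bound $d_g(W, \Es_x) \le \alpha$ from the construction, we obtain $d_g(W, \Es_x) = \alpha$ and hence $\sin d_g(W, \Es_x) = \sin \alpha$. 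The main subtlety is ensuring that the nearest-subspace construction degenerates gracefully (in particular when $\bar{x} = 0$ or $m = 1$) so that the intersection $W \cap W^*$ really has dimension $m-1$ and Lemma~\ref{le:dga} is applicable.
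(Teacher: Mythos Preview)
Your proof is correct and follows the same two-step strategy as the paper: build the explicit candidate $W^*=\overline{W}+\R x$ and invoke Lemma~\ref{le:dga} for the upper bounds, then establish a uniform lower bound on $d_p(W,W')$ over $W'\in\Es_x$ and lift it to $d_g$ via the largest principal angle and~\eqref{eq:d_gd}. The one substantive difference is in the lower-bound step for $d_p$: the paper picks balanced representations $B,\tilde B$ of $W,W'$ and bounds the smallest singular value of $\tilde B B^T$ by evaluating at a particular unit vector, whereas you simply test the operator $\Pi_W-\Pi_{W'}$ at $x/\|x\|$ and read off $\|(\Pi_W-\Pi_{W'})x\|/\|x\|=\sin\alpha$ directly from $\Pi_{W'}x=x$. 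Your argument is shorter, uses only the definition of $d_p$, and bypasses the balanced-matrix computation; it also makes the handling of the degenerate case $\bar x=0$ transparent.
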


\begin{proof}
Let $w\in W\setminus0$ be such that $\theta:=\sa(x,W) =\sa(x,w)$.
Without loss of generality we may assume $\|x\|=\|w\|=1$.
We have the orthogonal decomposition 
$W = \overline{W} + \R w$, where $\overline{W} := W \cap w^\bot$. 
Note that 
$x\in\overline{W}^\bot$ since $x-\cos(\theta)\,w \in W^\bot$. 
Hence we have an orthogonal decomposition 
$W' := \overline{W} + \IR\,x$ and 
Lemma~\ref{le:dga} implies that 
$$
 d_p(W,\Es_x) \le \sin d_g(W,\Es_x) \le \sin d_g(W,W') = \sin\theta .
$$

It remains to prove that $d_p(W,\Es_x) \geq \sin\theta$,  
For this, take any space $W'\in\Es_x$ and 
put $\ol{W} := W'\cap x^\bot$. Then we have an 
orthogonal decomposition $W'= \ol{W} + \IR\, x$.
In order to calculate the principal angles between $W$ and $W'$, 
let $b_1,\ldots,b_{m-1}$ be an orthonormal basis of $\ol{W}$
and consider the balanced matrix 
$\tilde{B}\in\IR^{m\times n}$ with the rows $x,b_1,\ldots,b_{m-1}$.
Then we have 
$$
\tilde{B}w = (x^Tw, x^T b_1,\ldots,x^T b_{m-1}) = (x^Tw,0,\ldots,0)
$$ 
since $x \in \ol{W}^\bot$. Therefore, 
$\|\tilde{B}w\|= x^Tw = \cos\theta$.
Let $B\in\IR^{m\times n}$ denote a balanced matrix 
consisting of the first row $w$ and an orthonormal basis 
of $\overline{W}$.
Using the fact that the smallest singular value~$\sigma_m$
of the matrix $\tilde{B}B^T$ is given by
$\sigma_m = \min_{\|y\|=1} \| \tilde{B}B^Ty\|$
(cf.~\cite[Thm.~I.4.3]{Stewart}),
we conclude
$\sigma_m\leq \|\tilde{B}B^Te_1\|=\|\tilde{B}w\|=\cos\theta$.
If we denote by $\alpha=(\alpha_1,\ldots,\alpha_m)$ the
vector of principal angles between $W'$ and $W$, 
we get 
$\|\alpha\|_\infty \ge \arccos\sigma_m\ge \theta$ 
and hence, using~\eqref{eq:d_gd}, 
$d_g(W',W)  = \sin \|\alpha\|_\infty \ge \sin\theta$.
\end{proof}

\begin{proof}[Proof of Proposition~\ref{pro:GGsa}]
Note that by definition, 
$$
 \mbox{$\DGm = \{W' \in \GR \mid W'\cap C \ne \{0\} \} = \bigcup_{x\in C\setminus \{0\}} \Es_x$} \; .
$$
Combinig this with Lemma~\ref{le:Ex} we obtain
\begin{equation}\label{eq:sigh}
 d_p(W,\DGm ) =  \inf_{x\in C\setminus \{0\}} d_p(W,\Es_x) = \inf_{x\in C\setminus \{0\}} \sin\sa(x,W) = \sin\sa(C,W) . 
\end{equation}
Moreover, Proposition~\ref{pro:Gtop} implies $d_p(W,\SGm) = d_p(W,\DGm)$ 
in the case $W\in\PGm$. 
\end{proof}

\begin{proof}[Proof of Theorem~\ref{th:CGcharDG}]
Arguing as for~\eqref{eq:sigh} with $d_g$ instead of $d_p$ and using Lemma~\ref{le:Ex}, 
we get 
$$
  \sin d_g(W,\DG_m(C)) = \inf_{x\in C\setminus \{0\}} \sin d_g(W,\Es_x) 
     = \inf_{x\in C\setminus \{0\}} d_p(W,\Es_x) = d_p(W,\DG_m(C)) .
$$  
This implies the assertion in the case $W\in\PG_m(C)$. 

For the case $W\in\DG_m(C)$ we recall that the involution 
$\iota_m\colon\Gr_{n,m} \to \Gr_{n,n-m},\; W \mapsto W^\bot$
is an isometry both for $d_p$ and $d_g$, cf.~\eqref{eq:perp}.
This implies 
$$
 d_p(W^\perp,\Sigma_{n-m}(\breve{C})) = d_p(W,\Sigma_m(C)), \quad
 d_g(W^\perp,\Sigma_{n-m}(\breve{C})) = d_g(W,\Sigma_m(C)), 
$$
as a consequence of the duality relations~\eqref{eq:sym}. 
Hence the assertion in the case $W\in\DG_m(C)$ follows 
from the one for $W\in\PG_m(C)$.
\end{proof}

We complement these results by a different characterization of the projective distance.
The {\em Hausdorff distance} between $W_1$ and $W_2$ in $\GR$ is defined as 
\begin{equation}\label{eq:d_H(W_1,W_2)}
  d_H(W_1,W_2) = \max\big\{ \sphericalangle(x,W_2)\mid x\in W_1\setminus \{0\} \big\} \; .
\end{equation}
This notion of distance evolves from the identification
of a subspace $W\in\Gr_{n,m}$ with the
subsphere $W\cap S^{n-1}$ of the unit sphere.
As the unit sphere is a metric space,
also the set of closed subsets of $S^{n-1}$ is endowed with a natural
metric, which is known as the Hausdorff metric (cf.~for example~\cite[\S1.2]{M:06}).
For subspheres resp.~subspaces via the above identification,
this metric is given as stated in~\eqref{eq:d_H(W_1,W_2)}.

\begin{proposition}\label{lem:d_p,d_H}
For $W_1,W_2\in\Gr_{n,m}$ we have 
$d_p(W_1,W_2) =\sin d_H(W_1,W_2)$.
\end{proposition}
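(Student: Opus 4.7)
The plan is to reduce the statement to the identity $d_p(W_1,W_2)=\sin\alpha_m$ from \eqref{eq:d_gd}, where $\alpha_1\le\cdots\le\alpha_m$ are the principal angles between $W_1$ and $W_2$. It therefore suffices to show $d_H(W_1,W_2)=\alpha_m$.

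I would start by rewriting the Hausdorff distance using the formula $\sphericalangle(x,W_2)=\arccos(\|\Pi_{W_2}(x)\|/\|x\|)$ from Section~\ref{se:main}. Since $\arccos$ is decreasing,
\[
 d_H(W_1,W_2) \;=\; \arccos\,\min_{x\in W_1,\,\|x\|=1}\|\Pi_{W_2}(x)\| .
\]
Next I would pick balanced matrices $B_1,B_2\in\R^{m\times n}$ with $W_i=\im(B_i^T)$, so that by Lemma~\ref{le:bal} the map $B_i^T$ is an isometry onto $W_i$ and $\Pi_{W_2}=B_2^TB_2$. Parametrizing $x=B_1^Ty$ with $y\in\R^m$ gives $\|x\|=\|y\|$, and using the isometry of $B_2^T$ one gets $\|\Pi_{W_2}(x)\|=\|B_2^TB_2B_1^Ty\|=\|B_2B_1^Ty\|$. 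Thus
\[
 \min_{x\in W_1,\,\|x\|=1}\|\Pi_{W_2}(x)\| \;=\; \min_{\|y\|=1}\|B_2B_1^Ty\| \;=\; \sigma_m(B_2B_1^T),
\]
the smallest singular value of $B_2B_1^T$ (using the variational characterization already cited in the proof of Lemma~\ref{le:Ex}).

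By the very definition of principal angles in Section~\ref{se:prelim}, $\sigma_m(B_2B_1^T)=\cos\alpha_m$. Combining these steps yields $d_H(W_1,W_2)=\arccos(\cos\alpha_m)=\alpha_m$, and then \eqref{eq:d_gd} gives
\[
 \sin d_H(W_1,W_2) \;=\; \sin\alpha_m \;=\; d_p(W_1,W_2),
\]
as required. The argument is essentially a bookkeeping exercise; the only genuine ingredient that needs care is checking that, after passing to the balanced representatives, the orthogonal projection $\Pi_{W_2}$ restricted to $W_1$ turns into the linear map $B_2B_1^T$ and that its norm on the unit ball of $W_1$ realises exactly the smallest singular value. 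Once that translation is made, the statement falls out of the known identification of singular values of $B_2B_1^T$ with cosines of principal angles, so I do not anticipate a substantive obstacle.
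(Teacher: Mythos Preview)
Your proof is correct and follows essentially the same approach as the paper: both arguments pick balanced representatives $B_1,B_2$, identify $\min_{x\in W_1\cap S^{n-1}}\|\Pi_{W_2}x\|$ with the smallest singular value of $B_2B_1^T$ (equivalently $B_1B_2^T$), recognize this as $\cos\alpha_m=\cos\|\alpha\|_\infty$, and then invoke~\eqref{eq:d_gd}. The only cosmetic difference is the direction of the chain of equalities.
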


\begin{proof}
Let $\alpha$ denote the vector of principal angles between $W_1$ and $W_2$. 
Further, 
let the rows of $B_i\in\IR^{m\times n}$
form an orthonormal basis of $W_i$, for $i=1,2$.
Using the characterization of the smallest singular value of a matrix $A$ via
$\min_{\|y\|=1} \| Ay\|$, and using Lemma~\ref{le:bal}, we get
  \[ \cos \|\alpha\|_\infty = \min_{\|y\|=1} \| B_2B_1^Ty\| = \min_{x\in W_1\cap S^{n-1}} \|B_2 x\|
   = \min_{x\in W_1\cap S^{n-1}} \|\Pi_{W_2} x\| \; . \]
Applying the arccosine thus yields
\[ 
  \|\alpha\|_\infty 
      = \max_{x\in W_1\cap S^{n-1}} \arccos(\|\Pi_{W_2} x\|) = \max_{x\in W_1\cap S^{n-1}} \sa(x,W_2)
      = d_H(W_1,W_2) \; . 
\]
The claim follows with \eqref{eq:d_gd}.
\end{proof}

\section{Perturbations of balanced operators}\label{sec:matrix-perturb}

We provide here the proofs of the remaining results stated in the introduction.

Suppose $B\in\R^{m\times n}_*$ and consider a line $\R x$  that is not contained in $W=\im (B^T)$. 
What is the minimum norm of a perturbation $\Delta$ of $B$ such that 
$\R x\subseteq\im (B^T + \Delta^T)$? 
The lemma below shows that, for a balanced matrix~$B$, 
the answer is given by $\sin\sa(x,W)$. 
We also answer the analogous question with regard to $\ker(B)$. 

The {\em Frobenius norm} of a matrix $A\in\R^{m\times n}$ is defined as 
$\|A \|_F:=\tr(A A^T)^{1/2}$. Recall that $\|A\|$ denotes the spectral norm,
that is, the largest singular value. Both matrix norms are invariant 
under the left and right multiplication with orthogonal matrices. 
We will frequently use the well known fact that 
$\|x y^T\| =  \|x y^T\|_F = \|x\|\, \|y\|$
for $x\in\R^m$ and $y\in\R^n$. 
(This follows easily from the orthogonal invariance.)

\begin{lemma}\label{prop:balanced-favor-1}
Let $B\in\IR^{m\times n}$ be balanced and $W:=\im(B^T)$. Furthermore, let $x\in \IR^n\setminus \{0\}$ 
and put $\alpha:=\sphericalangle(x,W)$, $\beta:=\sphericalangle(x,W^\bot)=\frac{\pi}{2}-\alpha$. 
\begin{enumerate}

\item Then for all $\Delta,\Delta'\in\IR^{m\times n}$, we have 
\begin{align*}
   x\in \im(B^T+\Delta^T) & \;\;\Rightarrow\;\; \|\Delta\| \geq \sin\alpha \; ,
\\[1mm] x\in \ker(B+\Delta') & \;\;\Rightarrow\;\; \|\Delta'\| \geq \sin\beta \; . 
\end{align*}

\item Additionally suppose that $x\not\in W^\bot$. Then there 
exist matrices $\Delta,\Delta'\in\R^{m\times n}$ of rank at most one such that 
$\|\Delta\|_F = \sin\alpha$, $\|\Delta'\|_F = \sin\beta$, and
$x\in\im(B^T+\Delta^T)$, $x\in\ker(B+\Delta')$.  
\end{enumerate}
\end{lemma}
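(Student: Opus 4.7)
The plan is to reduce everything to two structural properties of a balanced $B$ from Lemma~\ref{le:bal}: (i) the map $B^T\colon\R^m\to W$ is isometric, and (ii) $B^TB=\Pi_W$. After normalizing $\|x\|=1$, these immediately give $\|Bx\|^2 = x^T\Pi_W x = \|\Pi_W x\|^2 = \cos^2\alpha = \sin^2\beta$.

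With this in hand, the second inequality in~(1) is immediate: $(B+\Delta')x=0$ yields $\Delta' x = -Bx$, so $\|\Delta'\|\geq\|\Delta' x\|=\|Bx\|=\sin\beta$. For the first inequality, assume $(B^T+\Delta^T)y=x$ and set $z_0:=B^Ty\in W$. Then $\Delta^Ty = x-z_0$, and~(i) gives $\|y\|=\|z_0\|$, so $\|\Delta\|\geq \|x-z_0\|/\|z_0\|$. The problem reduces to the geometric bound
\[
 \min_{z_0\in W\setminus\{0\}}\frac{\|x-z_0\|}{\|z_0\|}\;\geq\;\sin\alpha,
\]
which is the main technical step. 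Parameterizing $z_0=t\hat z$ with $\hat z\in W\cap S^{n-1}$ and $t>0$, the squared ratio becomes $t^{-2}-2t^{-1}(x^T\hat z)+1$, whose minimum over $t>0$ equals $1-(x^T\hat z)^2 = \sin^2\sa(x,\hat z)$ if $x^T\hat z>0$ (and is $1$ otherwise); since $|x^T\hat z|\leq \|\Pi_W x\|=\cos\alpha$, this is always at least $\sin^2\alpha$.

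For~(2) I will exhibit rank-one optimizers mirroring these lower bounds. On the dual side, set $\Delta':=-(Bx)\,x^T$; this has rank one, satisfies $(B+\Delta')x=Bx-Bx=0$, and $\|\Delta'\|_F=\|Bx\|\cdot\|x\|=\sin\beta$. On the primal side, the assumption $x\notin W^\bot$ gives $\alpha<\pi/2$ and $Bx\neq 0$; choose $y:=\sec^2\alpha\cdot Bx$, so that $z_0:=B^Ty=\sec^2\alpha\cdot\Pi_W x$ realizes the equality case above (with $\hat z=\Pi_W x/\cos\alpha$ and $t=\sec\alpha$). Define $\Delta^T:=(x-z_0)\,y^T/\|y\|^2$, which has rank at most one. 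Then $(B^T+\Delta^T)y = z_0+(x-z_0) = x$, and a direct computation using $x^T\Pi_W x=\cos^2\alpha$ yields $\|x-z_0\|^2=\sec^2\alpha-1=\tan^2\alpha$ and $\|y\|^2=\sec^2\alpha$, so $\|\Delta\|_F=\tan\alpha/\sec\alpha=\sin\alpha$, as required.
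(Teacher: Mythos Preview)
Your proof is correct and follows essentially the same approach as the paper's. The constructions of $\Delta$ and $\Delta'$ in part~(2) are literally identical to the paper's (after unwinding the notation: the paper writes $\Delta = Bp\,(\cos(\alpha)\,x - p)^T$ with $p = \sec\alpha\cdot\Pi_W x$, which, using $B\Pi_W x = BB^TBx = Bx$, coincides with your $\Delta^T = (x-z_0)\,y^T/\|y\|^2$). The only presentational difference is in the first inequality of part~(1): the paper normalizes $\|v\|=1$ and invokes directly the geometric fact $\|rx - B^Tv\|\geq \sin\sphericalangle(x,B^Tv)\geq \sin\alpha$, whereas you carry out the equivalent quadratic minimization over $t>0$ explicitly; both arguments encode the same elementary distance-to-a-line computation.
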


\begin{proof}
1. If $x\in\im(B^T+\Delta^T)$, then there exists $v\in S^{m-1}$ and $r>0$ such that $(B^T+\Delta^T) v=r x$. 
Then we have, as $\|B^Tv\|=1$,
\[ 
  \|\Delta\| \;\geq\; \|\Delta^T v\| = \|r  x - B^T v\| \;\geq\; \sin\sa(x,B^Tv) \;\geq\; \sin\alpha \; .
\]
If $(B+\Delta')\cdot x=0$, we have, writing $x^\circ := \|x\|^{-1} x$,
\[ 
   \|\Delta'\| \;\geq\; \|\Delta'  x^\circ\| \;=\; \|Bx^\circ\| \;=\; \|B^TB x^\circ\| \;=\; \cos\alpha \;=\; \sin\beta  \; , 
\]
as $B^TB$ is the orthogonal projection onto $W$, cf.\ Lemma~\ref{le:bal}. 

2. Without loss of generality, we may assume $\|x\|=1$. Then the matrices in the lemma may be chosen as 
\[ 
\Delta := Bp \, (\cos(\alpha)\,x-p)^T \;,\quad \Delta' := -B x x^T \; , 
\]
where 
$p := \cos(\alpha)^{-1}\, B^TBx$ is  
the normalized orthogonal projection of $x$ on $W$. These are matrices
of rank at most~$1$.
Using the fact $\| yz^T\|_F = \|y\| \|z\|$ we obtain for their 
Frobenius norms
\[
  \|\Delta\|_F = \|Bp\|\cdot \|\cos(\alpha)\,x-p\| = \sin\alpha 
\]
and 
\[
  \|\Delta'\|_F  = \|B x x^T\|_F = \|B x\|\cdot \|x\| =
\|B^TB x\|\cdot \|x\| = \cos\alpha = \sin\beta \; .
\]
Furthermore, we have
\begin{align*}
   (B^T+\Delta^T) Bx & = B^TBx +  (\cos(\alpha)\,x-p) \, p^TB^TBx
\\ & = \cos(\alpha)\,p + \cos(\alpha)\cdot (\cos(\alpha)\,x-p) \, p^Tp = \cos^2(\alpha)\cdot x \; ,
\end{align*}
which shows that $x\in\im(B^T+\Delta^T)$. 
Moreover, we have $(B+\Delta')x = Bx - B x x^T x = 0$, which shows that $x\in\ker(B+\Delta')$.
\end{proof}

\begin{proof}[Proof of Theorem~\ref{th:main}] 
Let $B\in\R^{m\times n}$ be balanced such that $W=\im(B^T)$.
We shall distinguish two cases.

(i)  We assume that $W\in\PGm$. 
Lemma~\ref{prop:balanced-favor-1}(1) implies that for any $x\ne 0$, 
\begin{equation}\label{eq:uno}
   \inf\{\|\Delta\| \mid x\in\im(B^T+\Delta^T)\} \geq  \sin\sa(x,W) \; .
\end{equation}
In the case $x\not\in W^\bot$, 
Lemma~\ref{prop:balanced-favor-1}(2) implies that equality holds 
(and the infimum is attained).
By a limit consideration it follows that equality also holds for $x\in W^\bot\setminus \{0\}$ 
(but the infimum may not be attained). 
It follows from the equality in~\eqref{eq:uno} that 
\begin{equation}\label{eq:hier}
 \inf\{\|\Delta\| \mid \im(B^T+\Delta^T) \cap C \ne 0\}  \ =\  
 \inf_{x\in C\setminus \{0\}} \sin\sa(x,W) \ =\ \sin\sa(C,W) .
\end{equation}
We conclude that 
$d(B,\Ds) = \sin\sa(C,W)$, where 
$\Ds:=\{A \in\R^{m\times n} \mid \im(A^T) \cap C\ne 0 \}$. 

By~\eqref{eq:star2}, we have 
$d(B,\FDR) = \min \{d(B,\Ds),d(B,\RD)\}$.
But the Eckart-Young Theorem 
(cf.\ Remark~\ref{re:EY}) implies 
$d(B,\RD) = 1$. 
Therefore we have 
by the definition of Renegar's condition number~\eqref{def:RCN}, 
$$
\CR(B)^{-1} = d(B,\SR) = d(B,\FDR) = d(B,\Ds) = \sin\sa(C,W) .
$$
Here we used the assumption $W\in\PGm$, which means $B\in\FPR$. 
Finally, Proposition~\ref{pro:GGsa} states that 
$\CG(W)^{-1} = \sin\sa(C,W)$. 
Hence we conclude that $\CG(W)= \CR(B)$ in the case $W\in\PGm$. 

(ii) We assume now that $W\in\DGm$, that is, $B\in\FDR$.
Lemma~\ref{prop:balanced-favor-1}(1) implies for $x\ne 0$ that 
$$
 \inf\{\|\Delta'\| \mid x\in\ker(B+\Delta')\} \;\ge\;\sin\sa(x,W^\bot) \; 
$$
and Lemma~\ref{prop:balanced-favor-1}(2) shows that equality holds. 
Taking the infimum over all nonzero $x\in\breve{C}$ it follows that 
\[ 
 \CR(B)^{-1} = d(B,\FPR) 
  = \inf\big\{ \|\Delta'\|\mid \ker\left( B + \Delta'\right)\cap \breve{C}\neq \{0\} \big\} 
  = \sin\sa(\breve{C},W^\bot) \; . 
\]
On the other hand, 
$W\in\DGm$ implies $W^\bot\in\PG_m(\breve{C})$ and therefore, 
Proposition~\ref{pro:GGsa} yields
$\CG_{\breve{C}}(W^\bot)^{-1} = \sin\sa(\breve{C},W^\bot)$.
Hence we conclude that 
$\CG_{\breve{C}}(W^\bot) = \CR_C(B)$. 
Finally, due to~\eqref{eq:SGdual}, we get 
$\CG_C(W) = \CG_{\breve{C}}(W^\bot)  = \CR_C(B)$, 
which completes the proof of
Theorem~\ref{th:main}. 
\end{proof}

\begin{remark}\label{re:Frobnorm}
The proof of Theorem~\ref{th:main} shows that 
$d_F(A,\SR)= d(A,\SR)$ for a balanced matrix $A\in\R^{m\times n}$, 
where $d$ and $d_F$ denote the distances measured in the 
spectral and Frobenius norm, respectively. 
In fact, this equality also holds for the nonbalanced case. In the dual
feasible case $A\in\FDR$,
a perturbation $\Delta$ such that $A+\Delta\in\FPR$
and $\|\Delta\|_F=d(A,\SR)$ is given by
$\Delta=-App^T$, where $p\in \breve{C}\cap S^{n-1}$ is chosen such that
$\|Ap\|=\min\{\|Aq\|\mid q\in\breve{C}\cap S^{n-1}\}$, cf.~\cite[Lem.~3.2]{BF:09}.
In the primal feasible case $A\in\FPR$, 
the fact that one can find
rank-one perturbations $\Delta$ such that $A+\Delta\in\SR$ and
$\|\Delta\|_F=d(A,\SR)$ follows from~\cite[Prop.~3.5]{Pe:00}.
\end{remark}

\begin{proof}[Proof of Theorem~\ref{th:BF}] 
Let $A=SB$ be the polar decomposition of $A\in\R^{m\times n}_*$.
Then $B$ is balanced and $S=\sqrt{AA^T}$. 
Since $A$ and $S$ have the same singular values 
we have 
$\|S\|= \|A\|$ and $\|S^{-1}\|= \|A^\dagger\|$ 
(compare Section~\ref{se:prelim}).  
By the main Theorem~\ref{th:main} 
we have $\CG(W)=\CR(B)$. 
Thus, by the definition~\eqref{def:RCN} of Renegar's condition number, 
the assertion of Theorem~\ref{th:BF}
is equivalent to 
$$
 \frac1{d(B,\SR)} \;\leq\; \frac{\|A\|}{d(A,\SR)} \;\leq\; \|A\|\,\|A^\dagger\| \, \frac1{d(B,\SR)} ,
$$
or, equivalently,  
\begin{equation}\label{eq:dist-inequ}
\|S^{-1}\| \, d(B,\SR) \; \le\; d(A,\SR)\;\le\; \|S\|\, d(B,\SR)\; .
\end{equation}

To show the right-hand inequality, let $\tilde{B}\in\SR$ be 
such that $d(B,\SR)= \|B-\tilde{B}\|$. 
We define $\tilde{A} := S\tilde{B}$. 
Then we have $\tilde{A}\in\SR$ by the invariance of $\SR$ under the 
$\GL(m)$-left action on $\R^{m\times n}$. 
Therefore, 
$$
 d(A,\SR) \;\le\; \|A-\tilde{A}\| \;\le\; \|S(B-\tilde{B})\| \;\le\; 
 \|S\| \, \|B-\tilde{B}\| = \|S\|\, d(B,\SR) . 
$$
For the left-hand inequality, let $\tilde{A}\in\SR$ be 
such that $d(A,\SR)= \|A-\tilde{A}\|$. 
We define $\tilde{B} := S^{-1}\tilde{A}$ and note that $\tilde{B}\in\SR$.  
Then we have 
  \[
 d(B,\SR) \;\le\; \|B-\tilde{B}\| \;\le\; \|S^{-1}(A-\tilde{A})\| \;\le\; 
 \|S^{-1}\| \, \|A-\tilde{A}\| = \|S^{-1}\|\, d(A,\SR) . \qedhere 
  \]
\end{proof}

Theorem~\ref{th:main} combined with a known characterization of 
Renegar's condition number in the primal feasible case 
implies the following result.
Let $B_n$ denote the closed unit ball in~$\IR^n$. 

\begin{cor}\label{th:CGr}
For $W\in \PGm$ we have 
$\CG(W)^{-1} = \max\{r\mid r\cdot B_n\cap W\subseteq \Pi_{W}(\breve{C}\cap B_n)\}$.
\end{cor}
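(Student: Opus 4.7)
The plan is to combine Theorem~\ref{th:main} with the classical distance-to-infeasibility characterization of Renegar's condition number in the primal feasible case. That characterization asserts that for any $A\in\FPR$ of full row rank,
\[
 \CR(A)^{-1} \;=\; \max\{r\ge 0 \mid r\,B_m \subseteq A(\breve{C}\cap B_n)\} ,
\]
a result due to Renegar (see also~\cite{BF:09}).

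Given $W\in\PGm$, I would first pick a balanced matrix $B\in\R^{m\times n}$ with $\im(B^T)=W$. Since $W\in\PGm$ we have $B\in\FPR$, and Theorem~\ref{th:main} gives $\CG(W)=\CR(B)$. The cited characterization therefore yields
\[
 \CG(W)^{-1} \;=\; \max\{r\ge 0 \mid r\,B_m \subseteq B(\breve{C}\cap B_n)\} .
\]

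The remaining step is a translation of this identity into the intrinsic geometry of $W$ via the map $B^T\colon\R^m\to W$. By Lemma~\ref{le:bal}, $B^T$ is an isometric bijection onto $W$, and $B^TB=\Pi_W$. Consequently $B^T(r\,B_m)=r\,B_n\cap W$ and $B^T\bigl(B(\breve{C}\cap B_n)\bigr)=\Pi_W(\breve{C}\cap B_n)$. Because $B^T$ is injective, the inclusion $r\,B_m\subseteq B(\breve{C}\cap B_n)$ is equivalent to
\[
 r\,B_n\cap W \;\subseteq\; \Pi_W(\breve{C}\cap B_n),
\]
and taking the maximum over $r$ in this equivalent inclusion finishes the argument.

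There is no significant obstacle here; the one point worth verifying carefully is that the image/preimage translation under $B^T$ is a genuine equivalence (relying on injectivity of $B^T$ and $B^TB=\Pi_W$), and that the two maxima are attained, which follows from the compactness of $\breve{C}\cap B_n$ and $B_n\cap W$.
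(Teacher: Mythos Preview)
Your proof is correct and follows essentially the same route as the paper's: apply Theorem~\ref{th:main} to a balanced representative, invoke Renegar's characterization $\CR(A)^{-1}=\max\{r\mid r\,B_m\subseteq A(\breve{C}\cap B_n)\}$, and push the inclusion through the isometry $B^T$ using $B^TB=\Pi_W$. One small caveat: the Renegar characterization you quote holds only under the normalization $\|A\|=1$ (which is how the paper states it, citing~\cite{rene:95a} and~\cite[Cor.~3.6]{Pe:00}), not for arbitrary full-rank $A$---the right-hand side scales with $A$ while $\CR(A)$ is scale-invariant---but this does not affect your argument since a balanced matrix has $\|B\|=1$.
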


\begin{proof}
We use the following known characterization of Renegar's condition number in the primal feasible case: 
For $A\in\FPR$ and $\|A\|=1$ we have (see~\cite{rene:95a} or~\cite[Cor.~3.6]{Pe:00})
\begin{equation*}
  \CR(A)^{-1} = \max\{r\mid r\cdot B_m\subseteq A (B_n\cap \breve{C})\} \; .
\end{equation*}
Let $A\in\FPR$ be balanced such that $W=\im(A^T)$. 
From Theorem~\ref{th:main} and from the above characterization of $\CR(A)^{-1}$ we get
$$
 \CG(W)^{-1} = 
     \max\{r\mid r\cdot B_m\subseteq A(B_n\cap \breve{C})\}
   = \max\big\{r\mid r\cdot A^T(B_m)\subseteq A^T A(B_n\cap \breve{C})\big\} \; ,
$$
where the last equality follows from the fact that the map $A^T\colon\IR^m\to\IR^n$ 
is isometrical, cf.~Lemma~\ref{le:bal}. 
Since $A^T A$ is the orthogonal projection~$\Pi_W\colon\IR^n\to W$, we get
\[ 
\CG(W)^{-1} = \max\big\{r\mid r\cdot B_n\cap W\subseteq \Pi_W(B_n\cap \breve{C})\big\} \; . \qedhere
\]
\end{proof}

Finally, for the sake of completeness, we provide a proof of the topological 
result stated in the introduction

\begin{proof}[Proof of Proposition~\ref{pro:Gtop}]
(1)  Note that 
$\Es:=\{ (x,W) \in S^{n-1}\times\Gr_{n,m} \mid x \in W\}$
is a closed subset of $ S^{n-1}\times\Gr_{n,m}$.  
The set $\DG_m(C)$ is obtained as the projection of 
the compact set $(C\times\Gr_{n,m})\cap\Es$ 
onto the second component.
A  standard compactness argument shows that $\DG_m(C)$ is closed. 
The closedness of $\PG_m(C) = \iota_m^{-1}(\DG_{n-m}(\breve{C}))$, cf.~\eqref{eq:sym}, 
follows from the closedness of $\DG_{n-m}(\breve{C})$ 
and the continuity of $\iota_m$.

(2) As $\PG_m(C)\cup \DG_m(C)=\GR$, we have $\GR\setminus\PG_m(C)\subseteq\DG_m(C)$,
and thus $\partial \PG_m(C)\subseteq\SG_m(C)$. Analogously, we have $\partial \DG_m(C)\subseteq\SG_m(C)$.

For the other inclusion suppose $W\in\SG_m(C)$, say $x\in W\cap C$ for some $x\ne 0$. 
There exists a sequence $x_k\in\inter(C)$ such that 
$\|x-x_k\|\leq \frac{1}{k}$ for all $k>0$. 
Put $\overline{W}:=W\cap x^\bot$ and 
define $W_k:=\IR\,x_k+\overline{W}$.
Then $W_k$ converges to $W$ for $k\to\infty$.
As $x_k\in W_k\cap \inter(C)$ we have $W_k\in \GR\setminus\PG_m(C)$ by~\eqref{eq:alt} 
and hence $W\in\ol{\GR\setminus\PG_m(C)}$.
This proves $\SG_m(C)\subseteq\partial\PG_m(C)$.
We have thus shown that $\partial\PG_m(C) = \SG_m(C)$. 
The assertion $\partial\DG_m(C) = \SG_m(C)$ follows now 
from the duality relations~\eqref{eq:sym}.
\end{proof}

\section{Comparison with the GCC condition number}

The GCC condition number, introduced in~\cite{goff:80,ChC:01}, is only defined for the cone $C=\IR_+^n$. 
We briefly compare this notion with the Grassmann condition number in this special setting. 

For $p\in S^{m-1}$ and $0\leq\alpha\leq \pi$ we denote by 
$\capp(p,\alpha) := \{ x\in S^{m-1}\mid \langle x,p\rangle \geq \cos\alpha \}$
the spherical cap with center~$p$ and angular radius~$\alpha$. 
Let $A\in\IR^{m\times n}$ with nonzero columns $a_1,\ldots,a_n\in\IR^m$.
The GCC-condition number~$\CGCC(A)$ can be characterized as
$\CGCC(A)^{-1} = |\cos\rho|$, where $\rho$ is the minimum angle of a 
spherical cap containing all the points $a_i/\|a_i\|$, 
see~\cite{ChC:01} or \cite[\S 6.5]{COND}. 

The following relationship  between Renegar's condition number $\CR(A)$ and the GCC condition $\CGCC(A)$ 
was established in~\cite[Prop.~4-5]{ChC:01}:
$$
  \frac{\min_i\|a_i\|}{\|A\|}\cdot \CR(A) \;\leq\; \CGCC(A) \;\leq\; \sqrt{n}\cdot \CR(A) \; . 
$$
By Theorem~\ref{th:BF}, this immediately implies, setting  $W:=\im(A^T)$,
$$
  \frac{\min_i\|a_i\|}{\|A\|}\cdot \CG(W) \;\leq\; \CGCC(A) \;\leq\; \sqrt{n}\cdot \kappa(A)\cdot \CG(W) \; .
$$

The next two examples show that this estimate cannot be substantially improved in the sense that both 
quotients $\CG(W)/\CGCC(A)$ and $\CGCC(A)/\CG(W)$ cannot  be bounded as a function of the 
dimensions $m,n$ only. (The first example is from \cite[Ex. 3.1/4.1]{BF:09}.) 
Let $\veps>0$ and consider the matrices
\[ 
  A_\veps := \begin{pmatrix} 2\veps & 1 & 1 \\ 0 & -1 & 1 \end{pmatrix} \;,\qquad
  \tilde{A}_\veps := \begin{pmatrix} 1+\veps & 1+\veps & -1+\veps \\ -1 & -1 & 1 \end{pmatrix} \; . 
\]
We have $\CGCC(A_\veps) = \sqrt{2}$ since 
the smallest enclosing cap of the three points 
obtained by normalizing the columns of $A_\veps$ 
has the center $p=(1,0)$ and the angular radius $\rho=\pi/4$. Similarly, we obtain 
$\CGCC(\tilde{A}_\veps) = \frac{\sqrt{2}\sqrt{(1+\veps)^2 +1}}{\langle (1,1), (1+\veps,-1)\rangle}
 = \frac{2}{\veps}(1+o(1))$ for $\veps\to 0$. 
As for the Grassmann condition numbers, we put $W_\veps := \im(A_\veps^T)$
and define $\tilde{W}_\veps$ similarly. 
For $A_\veps$ we are in the dual feasible situation and 
$W_\veps^\perp$ equals the line spanned by $(1,-\veps,-\veps)$. 
Proposition~\ref{pro:GGsa} now easily implies that 
$\CG(W_\veps)^{-1} = \sin\alpha$, where 
$\cos\alpha= \frac{\langle (1,-\veps,-\veps), (1,0,0) \rangle}{\sqrt{1+2\veps^2}}$.
Hence, for $\veps\to 0$.  
$$ 
  \CG(W_\veps) = \frac1{\sin\alpha} = 
 \frac{\sqrt{1+2\veps^2}}{\veps\sqrt{2}} = \frac1{\veps\sqrt{2}}(1+o(1)) .
$$
Furthermore, $\tilde{W}_\veps$ equals the span of 
$(1,1,-1),(1,1,1)$ and hence is independent of $\veps$. 
Altogether, it follows that 
$\CG(W_\veps)/\CGCC(A_\veps)$ 
and 
$\CGCC(\tilde{A}_\veps)/\CG(\tilde{W}_\veps)$ 
are unbounded, as $\veps\to 0$. 

{\small

}


\begin{thebibliography}{10}

\bibitem{ambu:11}
D.~Amelunxen and P.~B\"urgisser.
\newblock Probabilistic analysis of the {G}rassmann condition number.
\newblock arXiv 1112.2603, 2011.

\bibitem{AB:08}
D.~Amelunxen and P.~B\"urgisser.
\newblock Robust smoothed analysis of a condition number for linear
  programming.
\newblock {\em Math. Program., Ser. A}.
\newblock Published online: April 21, 2010.

\bibitem{BF:09}
A.~Belloni and R.~M. Freund.
\newblock A geometric analysis of {R}enegar's condition number, and its
  interplay with conic curvature.
\newblock {\em Math. Program.}, 119(1, Ser. A):95--107, 2009.

\bibitem{BB:73}
A.~Berman and A.~Ben-Israel.
\newblock Linear equations over cones with interior: {A} solvability theorem
  with applications to matrix theory.
\newblock {\em Linear Algebra and Appl.}, 7:139--149, 1973.

\bibitem{GB:73}
{\.A}.~Bj{\"o}rck and G.~Golub.
\newblock Numerical methods for computing angles between linear subspaces.
\newblock {\em Math. Comp.}, 27:579--594, 1973.

\bibitem{booth}
W.~M. Boothby.
\newblock {\em An introduction to differentiable manifolds and {R}iemannian
  geometry}, volume 120 of {\em Pure and Applied Mathematics}.
\newblock Academic Press Inc., Orlando, FL, second edition, 1986.

\bibitem{buergICM}
P.~B\"urgisser.
\newblock Smoothed analysis of condition numbers.
\newblock In {\em Proceedings of the {I}nternational {C}ongress of
  {M}athematicians, Hyderabad}, volume~IV, pages 2609--2633. World Scientific,
  2010.

\bibitem{COND}
P.~B\"urgisser and F.~Cucker.
\newblock Condition.
\newblock monograph, to appear.

\bibitem{BCL:08a}
P.~B{\"u}rgisser, F.~Cucker, and M.~Lotz.
\newblock Coverage processes on spheres and condition numbers for linear
  programming.
\newblock {\em Ann. Probab.}, 38(2):570--604, 2010.

\bibitem{ChC:01}
D.~Cheung and F.~Cucker.
\newblock A new condition number for linear programming.
\newblock {\em Math. Program.}, 91(1, Ser. A):163--174, 2001.

\bibitem{chcu:04}
D.~Cheung and F.~Cucker.
\newblock Solving linear programs with finite precision. {I}. {C}ondition
  numbers and random programs.
\newblock {\em Math. Program.}, 99(1, Ser. A):175--196, 2004.

\bibitem{chcu:06}
D.~Cheung and F.~Cucker.
\newblock Solving linear programs with finite precision. {II}. {A}lgorithms.
\newblock {\em J. Complexity}, 22(3):305--335, 2006.

\bibitem{DST:09}
J.~Dunagan, D.A. Spielman, and S.-H. Teng.
\newblock Smoothed analysis of condition numbers and complexity implications
  for linear programming.
\newblock {\em Math. Program., Ser. A}, 126(2):315--350, 2011.

\bibitem{EckYou}
C.~Eckart and G.~Young.
\newblock The approximation of one matrix by another of lower rank.
\newblock {\em Psychometrika}, 1:211--218, 1936.

\bibitem{epfr:00}
M.~Epelman and R.M. Freund.
\newblock Condition number complexity of an elementary algorithm for computing
  a reliable solution of a conic linear system.
\newblock {\em Math. Program.}, 88(3, Ser. A):451--485, 2000.

\bibitem{farkas:01}
J.~Farkas.
\newblock {T}heorie der einfachen ungleichungen.
\newblock {\em Journal f\"ur die {R}eine und {A}ngewandte Mathematik},
  124:1--27, 1902.

\bibitem{frve:99}
R.M. Freund and J.R. Vera.
\newblock Condition-based complexity of convex optimization in conic linear
  form via the ellipsoid algorithm.
\newblock {\em SIAM J. Optim.}, 10(1):155--176 (electronic), 1999.

\bibitem{goff:80}
J.-L. Goffin.
\newblock The relaxation method for solving systems of linear inequalities.
\newblock {\em Math. Oper. Res.}, 5(3):388--414, 1980.

\bibitem{GoLoan}
G.~Golub and C.~Van~Loan.
\newblock {\em Matrix computations}.
\newblock Johns Hopkins Studies in the Mathematical Sciences. Johns Hopkins
  University Press, Baltimore, MD, third edition, 1996.

\bibitem{helga}
S.~Helgason.
\newblock {\em Differential geometry, {L}ie groups, and symmetric spaces},
  volume~80 of {\em Pure and Applied Mathematics}.
\newblock Academic Press Inc. [Harcourt Brace Jovanovich Publishers], New York,
  1978.

\bibitem{Jord}
C.~Jordan.
\newblock Essai sur la g\'eom\'etrie \`a {$n$} dimensions.
\newblock {\em Bull. Soc. Math. France}, 3:103--174, 1875.

\bibitem{MB:92}
J.~M. Miao and A.~Ben-Israel.
\newblock On principal angles between subspaces in {${\bf R}^n$}.
\newblock {\em Linear Algebra Appl.}, 171:81--98, 1992.

\bibitem{M:06}
M.~Moszy{\'n}ska.
\newblock {\em Selected topics in convex geometry}.
\newblock Birkh\"auser Boston Inc., Boston, MA, 2006.
\newblock Translated and revised from the 2001 Polish original.

\bibitem{Pe:00}
J.~Pe{\~n}a.
\newblock Understanding the geometry of infeasible perturbations of a conic
  linear system.
\newblock {\em SIAM J. Optim.}, 10(2):534--550 (electronic), 2000.

\bibitem{rene:94}
J.~Renegar.
\newblock Some perturbation theory for linear programming.
\newblock {\em Math. Programming}, 65(1, Ser. A):73--91, 1994.

\bibitem{rene:95b}
J.~Renegar.
\newblock Incorporating condition measures into the complexity theory of linear
  programming.
\newblock {\em SIAM J. Optim.}, 5(3):506--524, 1995.

\bibitem{rene:95a}
J.~Renegar.
\newblock Linear programming, complexity theory and elementary functional
  analysis.
\newblock {\em Math. Programming}, 70(3, Ser. A):279--351, 1995.

\bibitem{ST:04}
D.A. Spielman and S.-H. Teng.
\newblock Smoothed analysis: Why the simplex algorithm usually takes polynomial
  time.
\newblock {\em Journal of the ACM}, 51(3):385--463, 2004.

\bibitem{Stewart}
G.W. Stewart and J.~Sun.
\newblock {\em Matrix Perturbation Theory}.
\newblock Academic Press, 1990.

\bibitem{vaye:95}
S.A. Vavasis and Y.~Ye.
\newblock Condition numbers for polyhedra with real number data.
\newblock {\em Oper. Res. Lett.}, 17(5):209--214, 1995.

\bibitem{VRP:07}
J.~C. Vera, J.~C. Rivera, J.~Pe{\~n}a, and Yao Hui.
\newblock A primal-dual symmetric relaxation for homogeneous conic systems.
\newblock {\em J. Complexity}, 23(2):245--261, 2007.

\bibitem{Wo:67}
Y.-C. Wong.
\newblock Differential geometry of {G}rassmann manifolds.
\newblock {\em Proc. Nat. Acad. Sci. U.S.A.}, 57:589--594, 1967.

\end{thebibliography}
\end{document}